\newlength{\extramargin}
\newcommand {\Real}{\ensuremath{{\mathbb{R}}}}
\newcommand {\Complex}{\ensuremath{{\mathbb{C}}}}
\newcommand{\D}{\ensuremath{\mathcal D}}
\newcommand{\I}{\ensuremath{\mathcal I}}
\newcommand{\J}{\ensuremath{\mathcal J}}
\newcommand{\V}{\ensuremath{\mathcal V}}
\newcommand{\setS}{\ensuremath{\mathcal S}}
\newcommand{\setE}{\ensuremath{\mathcal E}}
\newcommand{\N}{\ensuremath{\mathcal N}}
\newtheorem{theorem}{Theorem}
\newtheorem{corollary}{Corollary}
\newtheorem{assumeprime}{Assumption}
\newtheorem{lemma}{Lemma}
\newtheorem{remark}{Remark}
\newtheorem{assumption}{Assumption}
\newtheorem{example}{Example}
\newenvironment{proof}{\noindent {\bf Proof.}}{\hfill \hspace*{1pt}\hfill$\blacksquare$}
\begin{document}
\title{Synchronization of harmonic oscillators under restorative coupling with applications in electrical networks}
\author{S. Emre Tuna\footnote{The author is with Department of
Electrical and Electronics Engineering, Middle East Technical
University, 06800 Ankara, Turkey. Email: {\tt
tuna@eee.metu.edu.tr}}} \maketitle

\begin{abstract}
The role of restorative coupling on synchronization of coupled
identical harmonic oscillators is studied. Necessary and
sufficient conditions, under which the individual systems'
solutions converge to a common trajectory, are presented. Through
simple physical examples, the meaning and limitations of the
theorems are expounded. Also, to demonstrate their versatility,
the results are extended to cover LTI passive electrical networks.
One of the extensions generalizes the well-known link between the
asymptotic stability of the synchronization subspace and the
second smallest eigenvalue of the Laplacian matrix.
\end{abstract}

\section{Introduction}
Studying the collective behavior of coupled harmonic oscillators
has been a rewarding enterprize for researchers who try to enhance
their understanding on a much-encountered phenomenon in nature:
synchronization. For instance, it has been observed that two or
more identical pendulums\footnote{We restrict our attention to the
small oscillations, where the pendulum can be represented by a
linear model.} connected by means of dampers eventually swing in
unison even if initially they are not synchronized; see
Fig.~\ref{fig:pendula}. This outcome is not difficult to reach by
intuition. Since the energy of the system can only leak out
through the dampers, the pendulums should eventually settle to a
constant energy state where there is no leakage. No leakage
implies that the relative velocities are all zero. In other words,
all the pendulums are moving at equal velocities at all times.
This is only possible when they are synchronized.

\begin{figure}[h]
\begin{center}
\includegraphics[scale=0.55]{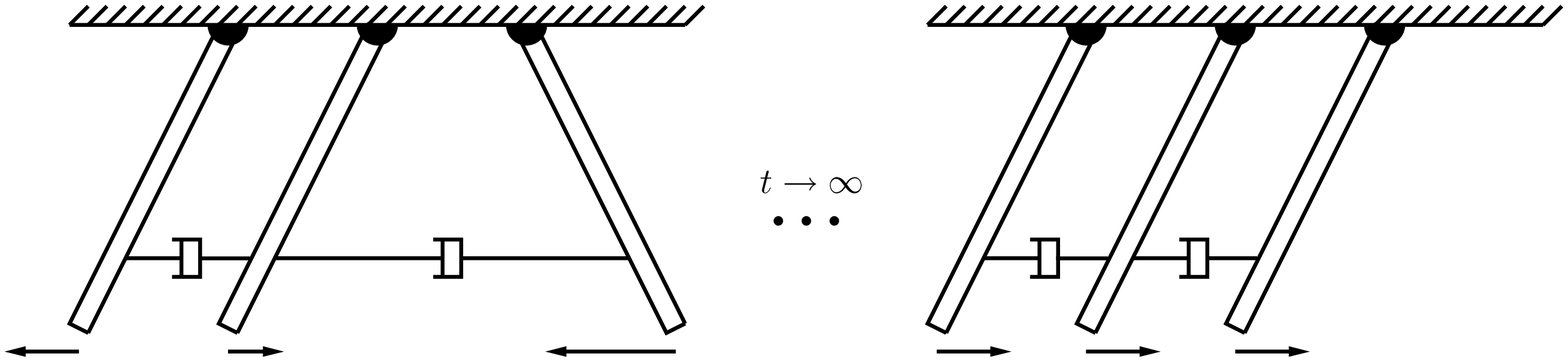}
\caption{Damper-coupled pendulums.} \label{fig:pendula}
\end{center}
\end{figure}

The simple example above has served as a starting point for many
significant generalizations. In \cite{ren08} Ren studies
synchronization of coupled harmonic oscillators allowing
time-varying oscillator dynamics as well as time-varying and
asymmetrical dampers. The case where the damping between a pair of
oscillators becomes effective only when the two are close enough
is investigated in \cite{su09}. The effect of nonlinear damping is
analyzed in \cite{cai10} and of impulsive damping in
\cite{zhou12}. A sampled-data approach is adopted in
\cite{zhang12,sun14}. Adaptive damping is covered in \cite{su13}
and synchronization in the presence of noisy damping is considered
in \cite{sun15}. Note that all these works consider only
dissipative coupling (e.g. dampers). From the engineering point of
view this choice is not surprising because introducing restorative
coupling (e.g. springs) will in general deteriorate performance by
causing longer and more oscillatory transient behavior; for
instance, the simulation results show that the three pendulums in
Fig.~\ref{fig:restorative} synchronize much less rapidly than
those in Fig.~\ref{fig:pendula}. Perhaps this may partly explain
why collective behavior of spring-coupled oscillators has
attracted more physicists than engineers. While for the engineer
spring is an option to couple two units, for the physicist it
represents an inherent characteristic of interaction. Relevant
investigations in the physics community goes as far back, if not
further, as the work of Fermi et al. \cite{fermi55} where chain of
nonlinearly coupled oscillator-like particles were studied. Due to
the richness of the subject and the increasing variety of
applications in both inanimate and biological systems, the area
has maintained its livelihood throughout many decades. See, for
instance, \cite{kapitaniak14}, \cite{marcheggiani14},
\cite{adato13}, \cite{kapitaniak14special} for recent progress.

\begin{figure}[h]
\begin{center}
\includegraphics[scale=0.55]{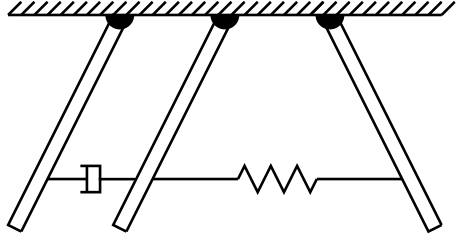}
\caption{Damper- and spring-coupled pendulums.}
\label{fig:restorative}
\end{center}
\end{figure}

Through this paper we aim to provide a comprehensive analysis of
the collective behavior of identical harmonic oscillators coupled
by both restorative and dissipative components. To the best of our
knowledge a detailed treatment of synchronization has not yet been
reported for this setting, where two different interconnection
graphs are simultaneously at work: the graph representing
restorative coupling and the graph representing dissipative
coupling. We present a necessary and sufficient condition on the
associated pair of Laplacian matrices, under which the individual
systems tend to oscillate in unison. We also point out a certain
sufficient-only, yet easier-to-check set of conditions
guaranteeing synchronization and exercise them on some simple
real-world examples for clarity. Later, we attempt to extend our
approach to the analysis of linear electrical networks of
identical oscillators (of arbitrary order) coupled through passive
impedances. For such networks we establish a link between
synchronization and the eigenvalues of the (complex) node
admittance matrix. This seems to be a natural extension of the
well-known connectivity condition in terms of the second smallest
eigenvalue of the (real-valued) Laplacian matrix.

\section{Coupled harmonic oscillators}\label{sec:mexicansky}

Consider the array of $q$ coupled harmonic
oscillators
\begin{eqnarray}\label{eqn:harmonic}
{\ddot z}_{i}+\omega_{0}^{2}z_{i}+\sum_{j=1}^{q}d_{ij}({\dot
z}_{i}-{\dot z}_{j})+\sum_{j=1}^{q}r_{ij}(z_{i}-z_{j})=0\,,\qquad
i=1,\,2,\,\ldots,\,q
\end{eqnarray}
where $z_{i}\in\Real$ and $\omega_{0}>0$ is the frequency of
uncoupled oscillations. The symmetric weights $d_{ij}=d_{ji}\geq
0$ and $r_{ij}=r_{ji}\geq 0$ respectively represent the
dissipative and restorative coupling between the $i$th and $j$th
oscillators. Note that without symmetry, i.e., either $d_{ij}\neq
d_{ji}$ or $r_{ij}\neq r_{ji}$, the solutions are not guaranteed
to be bounded unless some extra assumption is made. We take
$d_{ii}=0$ and $r_{ii}=0$. In this section and next we search for
conditions on the triple $(\omega_{0},\,\{d_{ij}\},\,\{r_{ij}\})$
under which the harmonic oscillators~\eqref{eqn:harmonic}
synchronize, i.e., $|z_{i}(t)-z_{j}(t)|\to 0$ as $t\to\infty$ for
all $i,\,j$ and all initial conditions.

Let $D,\,R\in\Real^{q\times q}$ denote the weighted Laplacian
matrices associated to the topologies described by the dissipative
coupling $\{d_{ij}\}$ and the restorative coupling $\{r_{ij}\}$,
respectively. That is,
\begin{eqnarray*}
D =
\left[\begin{array}{cccc}\sum_{j}d_{1j}&-d_{12}&\cdots&-d_{1q}\\
-d_{21}&\sum_{j}d_{2j}&\cdots&-d_{2q}\\
\vdots&\vdots&\ddots&\vdots\\
-d_{q1}&-d_{q2}&\cdots&\sum_{j}d_{qj}
\end{array}\right]\,,\qquad
R =
\left[\begin{array}{cccc}\sum_{j}r_{1j}&-r_{12}&\cdots&-r_{1q}\\
-r_{21}&\sum_{j}r_{2j}&\cdots&-r_{2q}\\
\vdots&\vdots&\ddots&\vdots\\
-r_{q1}&-r_{q2}&\cdots&\sum_{j}r_{qj}
\end{array}\right]\,.
\end{eqnarray*}
Note that these matrices are symmetric positive semidefinite since
$d_{ij}=d_{ji}\geq 0$ and $r_{ij}=r_{ji}\geq 0$. In particular, we
can write $z^{T}Dz=\sum_{j>i}d_{ij}(z_{i}-z_{j})^2$ and
$z^{T}Rz=\sum_{j>i}r_{ij}(z_{i}-z_{j})^2$, where $z=[z_{1}\ z_{2}\
\cdots\ z_{q}]^{T}\in\Real^{q}$. Let us now rewrite
\eqref{eqn:harmonic} as
\begin{eqnarray*}
{\ddot z}+\omega_{0}^{2}z+D{\dot z}+Rz=0\,.
\end{eqnarray*}
This, using $x=[z^{T}\ {\dot z}^{T}]^{T}\in\Real^{2q}$, allows us
to obtain
\begin{eqnarray}\label{eqn:harmonicarray}
{\dot
x}=\left[\begin{array}{cc}0&I_{q}\\-(\omega_{0}^{2}I_{q}+R)&-D\end{array}\right]x=:\Phi
x
\end{eqnarray}
where $I_{q}\in\Real^{q\times q}$ is the identity matrix.
Employing the symmetric positive definite matrix
\begin{eqnarray*}
P=\frac{1}{2}
\left[\begin{array}{cc}\omega_{0}^{2}I_{q}+R&0\\0&I_{q}\end{array}\right]
\end{eqnarray*}
we can establish the following Lyapunov equality
\begin{eqnarray*}
\Phi^{T}P+P\Phi=-\left[\begin{array}{cc}0&0\\0&D\end{array}\right]\,.
\end{eqnarray*}
Since the righthand side is negative semidefinite, each solution
$x(t)$ of the system~\eqref{eqn:harmonicarray} is bounded.
Moreover, by Krasovskii-LaSalle principle, $x(t)$ should converge
to the largest invariant region contained in the intersection
$\D\cap\{x:x^{T}Px\leq x(0)^{T}Px(0)\}$ where
\begin{eqnarray*}
\D:=\left\{x:\left[\begin{array}{cr}0&0\\0&D\end{array}\right]x=0\right\}\,.
\end{eqnarray*}
It turns out that the condition
\begin{eqnarray}\label{eqn:PBH}
{\rm null}\left[\begin{array}{c}R-\lambda I_{q}\\
D\end{array}\right]\subset {\rm range}\,{\bf 1}_{q}\ \mbox{for
all}\ \lambda\in\Complex
\end{eqnarray}
(where ${\bf 1}_{q}\in\Real^{q}$ is the vector of all ones)
guarantees that this largest invariant region is contained in the
synchronization subspace
\begin{eqnarray*}
\setS:={\rm range}\,\left[\begin{array}{cc}{\bf 1}_{q}&0\\0&{\bf
1}_{q}\end{array}\right]\,.
\end{eqnarray*}
In other words:

\begin{lemma}\label{lem:one}
Let \eqref{eqn:PBH} hold. Then and only then
\begin{eqnarray}\label{eqn:mexicansky}
x(t)\in\D\ \mbox{for all}\ t\implies x(t)\in\setS\ \mbox{for all}\
t
\end{eqnarray}
where $x(t)$ is the solution of the
system~\eqref{eqn:harmonicarray}.
\end{lemma}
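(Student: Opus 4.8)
The plan is to translate the dynamical statement \eqref{eqn:mexicansky} into an algebraic condition on eigenvectors and then recognize \eqref{eqn:PBH} as precisely that condition. First I would observe that, writing $x=[z^{T}\ \dot z^{T}]^{T}$, a solution of \eqref{eqn:harmonicarray} satisfies $x(t)\in\D$ for all $t$ exactly when $D\dot z(t)=0$ for all $t$; along any such solution the damping term in \eqref{eqn:harmonicarray} drops out and the dynamics collapse to the conservative oscillation $\ddot z=-(\omega_{0}^{2}I_{q}+R)z=:-Mz$. The structural fact driving everything is that $M=\omega_{0}^{2}I_{q}+R\succeq\omega_{0}^{2}I_{q}\succ0$ is symmetric and positive definite, so every such solution is a superposition of pure sinusoids $\cos(\omega t),\sin(\omega t)$ with strictly positive frequencies $\omega=\sqrt{\omega_{0}^{2}+\lambda}$, one for each eigenvalue $\lambda\ge0$ of $R$; in particular there is no zero-frequency drift mode to treat separately.

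For the ``if'' direction I would expand $z(0)$ and $\dot z(0)$ in an orthonormal eigenbasis of $M$ and write $\dot z(t)$ as the corresponding sum of sinusoids. Imposing $D\dot z(t)=0$ for all $t$ and grouping terms by distinct frequency, the linear independence of the functions $\{\cos\omega t,\sin\omega t\}$ forces $D$ to annihilate the $M$-eigencomponents of both $z(0)$ and $\dot z(0)$ that actually appear in the solution. Hence every eigenvector $v$ of $M$ excited by the solution obeys $Dv=0$; since $Mv=\mu v$ is equivalent to $Rv=(\mu-\omega_{0}^{2})v$, such a $v$ is a common solution of $Rv=\lambda v$ and $Dv=0$ with $\lambda=\mu-\omega_{0}^{2}$, i.e.\ a nonzero element of the nullspace appearing in \eqref{eqn:PBH}. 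Condition \eqref{eqn:PBH} then places every such $v$ in ${\rm range}\,\one_{q}$, so that both $z(t)$ and $\dot z(t)$ remain scalar multiples of $\one_{q}$ for all $t$, which is exactly $x(t)\in\setS$.

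For the ``only if'' direction I would argue by contraposition with an explicit counterexample. If \eqref{eqn:PBH} fails, there is a real $\lambda\ge0$ and a real $v\notin{\rm range}\,\one_{q}$ with $Rv=\lambda v$ and $Dv=0$: the complex nullspace in \eqref{eqn:PBH} is the complexification of a real eigenspace of the symmetric matrix $R$, so whenever it escapes ${\rm range}\,\one_{q}$ a real witness already does. Taking $z(t)=\cos(\omega t)\,v$ with $\omega=\sqrt{\omega_{0}^{2}+\lambda}>0$ produces a genuine solution of $\ddot z=-Mz$ for which $D\dot z(t)=-\omega\sin(\omega t)\,Dv=0$, so $x(t)\in\D$ for all $t$; yet $z(0)=v\notin{\rm range}\,\one_{q}$ shows $x(0)\notin\setS$, contradicting \eqref{eqn:mexicansky}.

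The step I expect to be the main obstacle is the passage, in the ``if'' direction, from the time-domain constraint $D\dot z(t)=0$ holding for \emph{all} $t$ to the algebraic statement $Dv=0$ for each excited eigenvector $v$. This needs care when $M$ has repeated eigenvalues: one must regroup the modal sum by distinct frequencies and invoke linear independence of the resulting trigonometric functions to peel off, frequency by frequency, the coefficient conditions $D(\Pi_{\omega}z(0))=0$ and $D(\Pi_{\omega}\dot z(0))=0$, where $\Pi_{\omega}$ denotes orthogonal projection onto the eigenspace of frequency $\omega$. The positivity $M\succ0$ is exactly what guarantees that all frequencies are nonzero, so this linear-independence argument applies uniformly and no separate static case arises.
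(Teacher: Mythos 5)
Your proposal is correct and follows essentially the same route as the paper's proof: both directions reduce $\D$-invariance to $D\dot z\equiv 0$ plus the conservative dynamics $\ddot z+(\omega_{0}^{2}I_{q}+R)z=0$, use the modal expansion into sinusoids at the distinct frequencies $\sqrt{\omega_{0}^{2}+\lambda_{k}}$ (whose linear independence yields $D\xi_{k}=0$ componentwise, exactly the step the paper settles by the distinctness of the $\omega_{k}$), and prove necessity by building the single-mode counterexample solution $z_{*}(t)=\cos(\omega_{*}t)\,\xi_{*}$ from a real eigenvector witnessing the failure of \eqref{eqn:PBH}. Your explicit handling of repeated eigenvalues via projections onto frequency eigenspaces is a slightly more detailed account of the same decomposition the paper invokes via its citation of Arnold.
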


\begin{proof}
We first establish
\eqref{eqn:PBH}$\implies$\eqref{eqn:mexicansky}. Let
$x(t)=[z(t)^{T}\ {\dot z}(t)^{T}]^{T}$ be a solution of the
system~\eqref{eqn:harmonicarray} that identically belongs to $\D$.
This means $D\dot{z}(t)\equiv 0$. Also,
\begin{eqnarray*}
{\dot x}
=\left[\begin{array}{cc}0&I_{q}\\-(\omega_{0}^{2}I_{q}+R)&0\end{array}\right]x-\left[\begin{array}{cc}0&0\\
0&D\end{array}\right]x=\left[\begin{array}{cc}0&I_{q}\\-(\omega_{0}^{2}I_{q}+R)&0\end{array}\right]x
\end{eqnarray*}
which implies
\begin{eqnarray}\label{eqn:nominal}
{\ddot z}+(\omega_{0}^{2}I_{q}+R)z=0\,.
\end{eqnarray}
Let $\lambda_{1},\,\lambda_{2},\,\ldots,\,\lambda_{p}$ be the
distinct ($p\leq q$) eigenvalues of $R$. Since $R$ is symmetric
positive semidefinite, these eigenvalues are real and nonnegative.
Consequently, the matrix $[\omega_{0}^{2}I_{q}+R]$ is symmetric
positive definite with eigenvalues
$\omega_{0}^{2}+\lambda_{1},\,\omega_{0}^{2}+\lambda_{2},\,\ldots,\,\omega_{0}^{2}+\lambda_{p}$.
Therefore \eqref{eqn:nominal} implies that the solution has the
form \cite[\S 23]{arnold89}
\begin{eqnarray}\label{eqn:arnold}
z(t)={\rm Re}\sum_{k=1}^{p}e^{j\omega_{k}t}\xi_{k}
\end{eqnarray}
where $\omega_{k}=\sqrt{\omega_{0}^{2}+\lambda_{k}}$ are distinct
and positive, and each $\xi_{k}\in\Complex^{q}$ (some of which may
be zero) satisfies
\begin{eqnarray}\label{eqn:arnold2}
0&=&([\omega_{0}^{2}I_{q}+R]-\omega_{k}^{2}I_{q})\xi_{k}\nonumber\\
&=&(R-\lambda_{k}I_{q})\xi_{k}\,.
\end{eqnarray}
Note that the condition $D\dot{z}(t)\equiv 0$ and
\eqref{eqn:arnold} imply
\begin{eqnarray}\label{eqn:arnold3}
D\xi_{k}=0
\end{eqnarray}
since $\omega_{k}$ are distinct and nonzero. Combining
\eqref{eqn:arnold2} and \eqref{eqn:arnold3} we can write
\begin{eqnarray}\label{eqn:PBH2}
\xi_{k}\in{\rm null}\left[\begin{array}{c}R-\lambda_{k} I_{q}\\
D\end{array}\right]\,.
\end{eqnarray}
Suppose now \eqref{eqn:PBH} holds. Then \eqref{eqn:PBH2} implies
$\xi_{k}\in{\rm range}\,{\bf 1}_{q}$ for all $k$. By
\eqref{eqn:arnold} this readily yields $z(t)\in{\rm range}\,{\bf
1}_{q}$ for all $t$. Consequently, ${\dot z}(t)\in{\rm
range}\,{\bf 1}_{q}$ and $x(t)\in\setS$ for all $t$.

Now we show the other direction
\eqref{eqn:mexicansky}$\implies$\eqref{eqn:PBH}. Suppose that
condition~\eqref{eqn:PBH} is not true. Then we can find an
eigenvalue $\lambda_{*}\geq 0$ of $R$ and a nonzero vector
$\xi_{*}\in\Real^{q}$ satisfying $\xi_{*}\notin{\rm range}\, {\bf
1}_{q}$ such that $D\xi_{*}=0$ and $(R-\lambda_{*}
I_{q})\xi_{*}=0$. Let
$\omega_{*}=\sqrt{\omega_{0}^{2}+\lambda_{*}}$. Using the pair
$(\omega_{*},\,\xi_{*})$ let us construct the function $
z_{*}:\Real\to\Real^{q}$ as $z_{*}(t)={\rm Re}(e^{j\omega_{*}
t}\xi_{*})$. This function satisfies the following properties.
First, since $\xi_{*}\notin{\rm range}\,{\bf 1}_{q}$, we have
\begin{eqnarray}\label{eqn:vball1}
z_{*}(0)=\xi_{*}\notin{\rm range}\,{\bf 1}_{q}\,.
\end{eqnarray}
Second, since $D\xi_{*}=0$, we have at all times
\begin{eqnarray}\label{eqn:vball2}
D\dot{z}_{*}(t)={\rm Re}(j\omega_{*}e^{j\omega_{*}
t}D\xi_{*})=0\,.
\end{eqnarray}
Third, since $[R+(\omega_{0}^2-\omega_{*}^{2})I_{q}]\xi_{*}=0$, we
can write at all times
\begin{eqnarray*}
{\ddot
z}_{*}(t)+(\omega_{0}^{2}I_{q}+R)z_{*}(t)=-\omega_{*}^{2}z_{*}(t)+(\omega_{0}^{2}I_{q}+R)z_{*}(t)={\rm
Re}(e^{j\omega_{*}
t}[R+(\omega_{0}^2-\omega_{*}^{2})I_{q}]\xi_{*})=0
\end{eqnarray*}
which together with \eqref{eqn:vball2} leads to
\begin{eqnarray}\label{eqn:vball3}
{\ddot
z}_{*}(t)+D\dot{z}_{*}(t)+(\omega_{0}^{2}I_{q}+R)z_{*}(t)\equiv0\,.
\end{eqnarray}
Let $x_{*}(t)=[z_{*}(t)^{T}\ {\dot z}_{*}(t)^{T}]^{T}$. It follows
from \eqref{eqn:vball3} that $x_{*}(t)$ satisfies
\eqref{eqn:harmonicarray} and hence is a solution of the system.
By \eqref{eqn:vball2} we can assert that the solution $x_{*}(t)$
belongs identically to $\D$, but \eqref{eqn:vball1} tells us that
$x_{*}(t)$ does not identically belong to $\setS$. That is, the
condition~\eqref{eqn:mexicansky} fails.
\end{proof}

\vspace{0.12in}

To the question asked at the beginning we can now give the answer:

\begin{theorem}\label{thm:one}
The harmonic oscillators~\eqref{eqn:harmonic} synchronize if and
only if \eqref{eqn:PBH} holds.
\end{theorem}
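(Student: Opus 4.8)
The plan is to prove the two implications separately, leaning almost entirely on Lemma~\ref{lem:one} together with the boundedness and LaSalle analysis already set up before it. Throughout I read $x(t)=[z(t)^{T}\ \dot z(t)^{T}]^{T}$, and I note that synchronization, $|z_{i}(t)-z_{j}(t)|\to0$, is exactly the statement that the position components of $x(t)$ approach $\mathrm{range}\,\one_{q}$; since membership in $\setS$ means precisely $z,\dot z\in\mathrm{range}\,\one_{q}$, it suffices to show $\mathrm{dist}(x(t),\setS)\to0$.

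For the ``if'' direction, \eqref{eqn:PBH}$\implies$synchronization, I would start from the facts recorded above: every solution $x(t)$ of \eqref{eqn:harmonicarray} is bounded, and along it $\dot V(x)=-\dot z^{T}D\dot z\le0$ with $\dot V$ vanishing exactly on $\D$ (because $D\ge0$). Hence the $\omega$-limit set $\M$ of $x(t)$ is a nonempty invariant set contained in $\D$, and $x(t)\to\M$. The decisive step is to upgrade $\M\subset\D$ to $\M\subset\setS$. Because $\M$ is invariant, the solution issuing from any point of $\M$ stays in $\M\subset\D$ for all $t$; that is, it satisfies the hypothesis $x(t)\in\D$ for all $t$ of \eqref{eqn:mexicansky}. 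Invoking Lemma~\ref{lem:one} under \eqref{eqn:PBH} forces this solution into $\setS$ for all $t$, and in particular its initial point lies in $\setS$. Thus $\M\subset\setS$, and since $x(t)\to\M$ we get $\mathrm{dist}(x(t),\setS)\to0$, i.e.\ synchronization.

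For the ``only if'' direction I would argue by contraposition, and here Lemma~\ref{lem:one} does the heavy lifting: if \eqref{eqn:PBH} fails, its second half constructs a genuine solution $x_{*}(t)$ built from $z_{*}(t)=\mathrm{Re}(e^{j\omega_{*}t}\xi_{*})$ with $\xi_{*}\in\Real^{q}$ real, $\xi_{*}\notin\mathrm{range}\,\one_{q}$, and $\omega_{*}>0$. It remains only to observe that this solution fails to synchronize: since $\xi_{*}\notin\mathrm{range}\,\one_{q}$ there are indices $i,j$ with $(\xi_{*})_{i}\neq(\xi_{*})_{j}$, so $z_{*i}(t)-z_{*j}(t)=\big((\xi_{*})_{i}-(\xi_{*})_{j}\big)\cos(\omega_{*}t)$ is a nonzero undamped sinusoid that does not tend to $0$. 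Hence synchronization fails for this initial condition, which establishes the contrapositive.

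The main obstacle I anticipate is the bridge in the first direction between the static, set-theoretic output of LaSalle (an invariant set sitting inside $\D$) and the trajectory-wise hypothesis of Lemma~\ref{lem:one} (membership in $\D$ for \emph{all} $t$). The resolution is exactly the invariance of $\M$: invariance is what licenses feeding each point of $\M$ into Lemma~\ref{lem:one} as the initial condition of an everywhere-in-$\D$ solution. Everything else is bookkeeping already available to me — that $x(t)\to\setS$ encodes the scalar synchronization statement, and that the non-synchronizing solution supplied by Lemma~\ref{lem:one}, being a finite sum of sinusoids, is manifestly a bona fide bounded solution of \eqref{eqn:harmonicarray}.
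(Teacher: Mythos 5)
Your proposal is correct and follows essentially the same route the paper intends: the paper leaves Theorem~\ref{thm:one} without a displayed proof precisely because it is the combination of the Lyapunov boundedness/Krasovskii--LaSalle setup preceding Lemma~\ref{lem:one} (for the ``if'' direction) with the explicit non-decaying periodic solution $z_{*}(t)=\mathrm{Re}(e^{j\omega_{*}t}\xi_{*})$ constructed in Lemma~\ref{lem:one}'s proof (for the ``only if'' direction). You have supplied exactly this argument, correctly filling in the two details the paper leaves implicit --- the invariance of the $\omega$-limit set as the bridge into Lemma~\ref{lem:one}, and the observation that the counterexample solution's deviation from synchrony is an undamped sinusoid and hence cannot tend to zero.
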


\begin{remark}
Note that the condition~\eqref{eqn:PBH} does not depend on the
natural frequency $\omega_{0}$.
\end{remark}

Before we end this section we attempt to interpret
condition~\eqref{eqn:PBH}. Recall that, given matrices
$C\in\Real^{m\times n}$ and $A\in\Real^{n\times n}$, the
unobservable subspace of the pair $(C,\,A)$ is
\begin{eqnarray*}
{\rm unobs}\,(C,\,A)={\rm null}\left[\begin{array}{c}C\\
CA\\
\vdots\\
CA^{n-1}\end{array}\right]\,.
\end{eqnarray*}
The below result reveals the meaning of condition~\eqref{eqn:PBH}
from the observability point of view.
\begin{theorem}\label{thm:observe}
Condition~\eqref{eqn:PBH} holds if and only if ${\rm
unobs}\,(D,\,R)={\rm range}\,{\bf 1}_{q}$.
\end{theorem}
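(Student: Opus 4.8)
The plan is to prove the equivalence by showing that both conditions characterize exactly the same subspace, namely the set of vectors $\xi$ that are "invisible" to $D$ across all eigenspaces of $R$. Recall that ${\rm unobs}\,(D,\,R)$ is $R$-invariant, so it decomposes into a direct sum of intersections with the eigenspaces of $R$. The key algebraic fact I would invoke is the Popov--Belevitch--Hautus (PBH) test: for a symmetric (hence diagonalizable) matrix $R$, a vector $\xi$ lies in ${\rm unobs}\,(D,\,R)$ if and only if $D\xi' = 0$ for each eigenvector-component $\xi'$ of $\xi$, which is equivalent to the statement that
\begin{eqnarray*}
{\rm unobs}\,(D,\,R)=\bigcap_{\lambda\in\Complex}\left({\rm null}\,(R-\lambda I_{q})+{\rm null}\,D\right)^{\perp\text{-dual characterization}}\,.
\end{eqnarray*}
More usefully, I would phrase it as: $\xi\in{\rm unobs}\,(D,\,R)$ iff for every eigenvalue $\lambda$ of $R$, the spectral projection of $\xi$ onto ${\rm null}\,(R-\lambda I_{q})$ lies in ${\rm null}\,D$. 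This is the standard fact that the unobservable subspace equals the span of those (generalized, here ordinary) eigenvectors of $R$ that lie in the kernel of $D$.

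First I would make precise the PBH characterization of the unobservable subspace for the symmetric pair $(D,\,R)$: because $R=\sum_{k=1}^{p}\lambda_{k}\Pi_{k}$ with orthogonal spectral projectors $\Pi_{k}$ summing to $I_{q}$, any $\xi$ decomposes as $\xi=\sum_{k}\Pi_{k}\xi$, and $DR^{m}\xi=\sum_{k}\lambda_{k}^{m}D\Pi_{k}\xi$. Since the Vandermonde matrix in the distinct $\lambda_{k}$ is invertible, the conditions $DR^{m}\xi=0$ for $m=0,\dots,q-1$ hold simultaneously if and only if $D\Pi_{k}\xi=0$ for every $k$. Thus
\begin{eqnarray*}
{\rm unobs}\,(D,\,R)=\bigoplus_{k=1}^{p}\left({\rm range}\,\Pi_{k}\cap{\rm null}\,D\right)=\bigoplus_{k=1}^{p}\left({\rm null}\,(R-\lambda_{k}I_{q})\cap{\rm null}\,D\right)\,.
\end{eqnarray*}

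Second, I would connect this to condition~\eqref{eqn:PBH}. Observe that ${\rm null}\,(R-\lambda_{k}I_{q})\cap{\rm null}\,D$ is precisely the subspace appearing in \eqref{eqn:PBH}, namely ${\rm null}\left[\begin{smallmatrix}R-\lambda_{k}I_{q}\\ D\end{smallmatrix}\right]$, and this is nonzero only for the genuine eigenvalues $\lambda_{k}$ of $R$ (for any other $\lambda$ the top block already forces $\xi=0$). Hence the ``for all $\lambda\in\Complex$'' quantifier in \eqref{eqn:PBH} reduces to a statement over the finite spectrum, and \eqref{eqn:PBH} says exactly that each summand satisfies ${\rm null}\left[\begin{smallmatrix}R-\lambda_{k}I_{q}\\ D\end{smallmatrix}\right]\subset{\rm range}\,{\bf 1}_{q}$. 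For the forward direction, I would note that $\{{\bf 1}_{q}\}$ is itself an eigenvector of $R$ (since $R{\bf 1}_{q}=0$, as row sums of a Laplacian vanish) and also lies in ${\rm null}\,D$; therefore ${\rm range}\,{\bf 1}_{q}\subset{\rm unobs}\,(D,\,R)$ always holds, and under \eqref{eqn:PBH} each summand collapses into ${\rm range}\,{\bf 1}_{q}$, giving ${\rm unobs}\,(D,\,R)={\rm range}\,{\bf 1}_{q}$. For the converse, if \eqref{eqn:PBH} fails there is an eigenvalue $\lambda_{*}$ with a vector $\xi_{*}\notin{\rm range}\,{\bf 1}_{q}$ in the corresponding summand, which then lies in ${\rm unobs}\,(D,\,R)$ but outside ${\rm range}\,{\bf 1}_{q}$, so the unobservable subspace strictly contains ${\rm range}\,{\bf 1}_{q}$.

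The main obstacle I anticipate is the careful justification of the Vandermonde/spectral-projection reduction, i.e., verifying that checking the kernel over the full Cayley--Hamilton stack $\{D, DR, \dots, DR^{q-1}\}$ is equivalent to checking $D\Pi_{k}\xi=0$ eigenvalue-by-eigenvalue; this is routine linear algebra but is where the symmetry (diagonalizability) of $R$ and the distinctness of the $\lambda_{k}$ are essential. A lighter alternative that sidesteps the projector bookkeeping would be to reuse the solution structure \eqref{eqn:arnold} already established in Lemma~\ref{lem:one}: a vector $\xi$ is unobservable for $(D,\,R)$ exactly when the free-oscillation trajectory it generates keeps $D\dot z\equiv 0$, which by the argument in the lemma is equivalent to $\xi$ being a sum of eigenvectors $\xi_{k}$ each satisfying \eqref{eqn:PBH2}; I would likely present the projector argument as the clean standalone statement and remark that it mirrors the dynamic reasoning of the preceding proof.
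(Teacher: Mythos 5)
Your proof is correct, but it takes a genuinely different route from the paper's. You prove a stronger structural statement --- that for diagonalizable $R$ the unobservable subspace decomposes as
\begin{eqnarray*}
{\rm unobs}\,(D,\,R)=\bigoplus_{k=1}^{p}\left({\rm null}\,(R-\lambda_{k}I_{q})\cap{\rm null}\,D\right)
\end{eqnarray*}
established by expanding $DR^{m}\xi=\sum_{k}\lambda_{k}^{m}D\Pi_{k}\xi$ and invoking invertibility of the Vandermonde matrix in the distinct $\lambda_{k}$ --- and then both implications of the theorem fall out immediately, since each summand is exactly one of the null spaces appearing in \eqref{eqn:PBH}, those null spaces vanish unless $\lambda$ is an eigenvalue of $R$, and ${\rm range}\,{\bf 1}_{q}$ always sits inside the unobservable subspace. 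The paper instead proves the two directions separately and asymmetrically: the implication ``\eqref{eqn:PBH} fails $\implies$ ${\rm unobs}\,(D,\,R)\neq{\rm range}\,{\bf 1}_{q}$'' is done by the same easy computation your decomposition contains ($DR^{m}\xi=\lambda^{m}D\xi=0$ for an offending eigenvector $\xi$), while the harder converse is done by an invariant-subspace extraction: writing ${\rm unobs}\,(D,\,R)$ with an orthogonal basis $\{{\bf 1}_{q},\eta_{2},\ldots,\eta_{\ell}\}$, showing that ${\rm span}\,\{\eta_{2},\ldots,\eta_{\ell}\}$ is $R$-invariant (this is where the symmetry of $R$ and the orthogonality of its eigenvectors enter), and concluding that it contains an eigenvector of $R$ outside ${\rm range}\,{\bf 1}_{q}$ which also lies in ${\rm null}\,D$. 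What your approach buys: an explicit description of the entire unobservable subspace (hence a measure of how badly \eqref{eqn:PBH} can fail), a symmetric treatment of both directions, and the fact that only diagonalizability of $R$ --- not symmetry --- is needed; the cost is the spectral-projector and Vandermonde bookkeeping that you yourself flag as the main technical burden. What the paper's approach buys: it avoids projectors entirely and stays within elementary manipulations of bases, at the price of a less transparent invariance argument that genuinely relies on orthogonality. One blemish in your write-up: the first displayed ``dual characterization'' with the $\perp$ superscript is garbled and not a meaningful formula; it does no harm because you immediately discard it, but it should simply be deleted --- the spectral-projection characterization that follows it is the correct and operative statement.
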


\begin{proof}
Suppose \eqref{eqn:PBH} fails. Then we can find an eigenvector
$\xi$ of $R$ satisfying $\xi\notin{\rm range}\,{\bf 1}_{q}$ and
$D\xi=0$. Let $\lambda$ be the corresponding eigenvalue, i.e.,
$R\xi=\lambda\xi$. We can write
\begin{eqnarray*}
\left[\begin{array}{c}D\\
DR\\
\vdots\\
DR^{q-1}\end{array}\right]\xi =
\left[\begin{array}{c}D\xi\\
DR\xi\\
\vdots\\
DR^{q-1}\xi\end{array}\right] =
\left[\begin{array}{c}D\xi\\
\lambda D\xi\\
\vdots\\
\lambda^{q-1}D\xi\end{array}\right]=0\,.
\end{eqnarray*}
Therefore $\xi\in{\rm unobs}\,(D,\,R)$. Since $\xi\notin{\rm
range}\,{\bf 1}_{q}$, we must have ${\rm unobs}\,(D,\,R)\neq{\rm
range}\,{\bf 1}_{q}$.

Now we show the other direction. Suppose ${\rm
unobs}\,(D,\,R)\neq{\rm range}\,{\bf 1}_{q}$. By definition we
have $D{\bf 1}_{q}=0$ and $R{\bf 1}_{q}=0$, meaning ${\rm
unobs}\,(D,\,R)\supset{\rm range}\,{\bf 1}_{q}$. Consequently,
$2\leq{\rm dim}\,{\rm unobs}\,(D,\,R)=:\ell$. Let
$\{\eta_{1},\,\eta_{2},\,\ldots,\,\eta_{\ell}\}$ with
$\eta_{1}={\bf 1}_{q}$ be an orthogonal basis for ${\rm
unobs}\,(D,\,R)$. Since $R$ is symmetric its eigenvectors form an
orthogonal basis for $\Complex^{q}$. Let this basis be
$\{\xi_{1},\,\xi_{2},\,\ldots,\,\xi_{q}\}$ with $\xi_{1}={\bf
1}_{q}$. Note that we have ${\rm
span}\,\{\eta_{2},\,\ldots,\,\eta_{\ell}\}\subset{\rm
span}\,\{\xi_{2},\,\ldots,\,\xi_{q}\}$. Now, let us choose an
arbitrary nonzero vector $w\in{\rm
span}\,\{\eta_{2},\,\ldots,\,\eta_{\ell}\}$. Since ${\rm
unobs}\,(D,\,R)$ is $R$-invariant we have $Rw\in{\rm
span}\,\{\eta_{1},\,\ldots,\,\eta_{\ell}\}$. Moreover, $w\in{\rm
span}\,\{\xi_{2},\,\ldots,\,\xi_{q}\}$ implies $Rw\in{\rm
span}\,\{\xi_{2},\,\ldots,\,\xi_{q}\}$ because $\xi_{i}$ are
eigenvectors. Hence we can write
\begin{eqnarray*}
Rw&\in&{\rm span}\,\{\eta_{1},\,\ldots,\,\eta_{\ell}\}\cap{\rm
span}\,\{\xi_{2},\,\ldots,\,\xi_{q}\}\\
&=&{\rm span}\,\{\eta_{2},\,\ldots,\,\eta_{\ell}\}\,.
\end{eqnarray*}
This implies (since $w$ was arbitrary) that ${\rm
span}\,\{\eta_{2},\,\ldots,\,\eta_{\ell}\}$ is $R$-invariant.
Consequently, ${\rm span}\,\{\eta_{2},\,\ldots,\,\eta_{\ell}\}$
contains at least one eigenvector $\xi$ of $R$. It must be that
$\xi\notin{\rm range}\,{\bf 1}_{q}$ because ${\bf 1}_{q}\notin{\rm
span}\,\{\eta_{2},\,\ldots,\,\eta_{\ell}\}$. Let $\lambda$ be the
corresponding eigenvalue, i.e., $R\xi=\lambda\xi$. Since
$\xi\in{\rm unobs}\,(D,\,R)$ we have $\xi\in{\rm null}\,D$
yielding ${\rm null}\,D\cap{\rm null}\,(R-\lambda
I_{q})\supset{\rm span}\,\{\xi\}$ which implies that
$\eqref{eqn:PBH}$ fails to hold.
\end{proof}

\section{Sufficient conditions for synchronization}

Although condition~\eqref{eqn:PBH} tells us definitely whether a
given array of harmonic oscillators will synchronize or not, it
may nevertheless be expensive or simply impossible to employ when,
for instance, the number of oscillators is large or certain
parameter values are unknown. Therefore it is worthwhile to look
for sufficient-only, yet simpler-to-check conditions to determine
synchronization. This is what we intend to do in this section.

Recall that an undirected graph is a pair of sets $(\V,\,\setE)$
where $\V=\{v_{1},\,v_{2},\,\ldots,\,v_{q}\}$ is the set of
vertices and the elements of the (possibly empty) set $\setE$ are
some (unordered) pairs of vertices $(v_{i},\,v_{j})$. Let us now
introduce two graphs associated to the array of harmonic
oscillators~\eqref{eqn:harmonic} as follows. The graph
$\Gamma_{\rm d}=(\V,\,\setE_{\rm d})$ describes the
interconnection associated to dissipative coupling and is such
that $(v_{i},\,v_{j})\in\setE_{\rm d}$ when $d_{ij}\neq 0$.
Similarly, $\Gamma_{\rm r}=(\V,\,\setE_{\rm r})$ denotes the
restorative coupling topology and $(v_{i},\,v_{j})\in\setE_{\rm
r}$ when $r_{ij}\neq 0$.

By construction $D{\bf 1}_{q}=0$, which yields ${\rm
null}\,D\supset{\rm range}\,{\bf 1}_{q}$. Since a graph is
connected when the eigenvalue of the associated Laplacian matrix
at the origin is simple, we have ${\rm null}\,D={\rm range}\,{\bf
1}_{q}$ when $\Gamma_{\rm d}$ is connected. Note that
\eqref{eqn:PBH} is trivially satisfied if ${\rm null}\,D={\rm
range}\,{\bf 1}_{q}$. Therefore we can assert:

\begin{corollary}\label{cor:simple}
The harmonic oscillators~\eqref{eqn:harmonic} synchronize if the
dissipative coupling graph $\Gamma_{\rm d}$ is connected.
\end{corollary}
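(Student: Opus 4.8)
The plan is to invoke Theorem~\ref{thm:one}, which states that the oscillators~\eqref{eqn:harmonic} synchronize if and only if the spectral condition~\eqref{eqn:PBH} holds; thus it suffices to show that connectivity of $\Gamma_{\rm d}$ forces~\eqref{eqn:PBH}. This reduces the problem from one about trajectories to a purely linear-algebraic statement about the null spaces of $D$ and $R-\lambda I_{q}$.

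First I would translate graph connectivity into an algebraic fact about $D$. Since $D$ is the weighted Laplacian of $\Gamma_{\rm d}$, the standard spectral-graph characterization quoted just before the statement — that a Laplacian has a simple eigenvalue at the origin exactly when the underlying graph is connected — gives that $\Gamma_{\rm d}$ connected implies ${\rm null}\,D$ is one-dimensional. Combined with the always-true inclusion ${\rm range}\,{\bf 1}_{q}\subset{\rm null}\,D$ (each row of $D$ sums to zero, so $D{\bf 1}_{q}=0$), this upgrades to the equality ${\rm null}\,D={\rm range}\,{\bf 1}_{q}$.

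The closing step is to observe that, for every $\lambda\in\Complex$, the null space appearing in~\eqref{eqn:PBH} sits inside ${\rm null}\,D$: any $\xi$ in that null space must in particular satisfy $D\xi=0$, regardless of the constraint $(R-\lambda I_{q})\xi=0$. Hence
\begin{eqnarray*}
{\rm null}\left[\begin{array}{c}R-\lambda I_{q}\\ D\end{array}\right]\subset{\rm null}\,D={\rm range}\,{\bf 1}_{q}\,,
\end{eqnarray*}
so~\eqref{eqn:PBH} holds for all $\lambda$, and Theorem~\ref{thm:one} then delivers synchronization.

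I do not expect a genuine obstacle here: the corollary is essentially a repackaging of the discussion preceding its statement, and the only nontrivial input is the cited spectral characterization of connectivity. The one point I would keep tidy is that the containment in the display above uses nothing about $R$ and nothing about the particular $\lambda$ chosen — the dissipative structure alone closes the argument. This is precisely why it is connectivity of $\Gamma_{\rm d}$, and not of $\Gamma_{\rm r}$, that is being assumed.
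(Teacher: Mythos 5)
Your proposal is correct and takes essentially the same approach as the paper: the paper's own argument (given in the paragraph preceding the corollary) is exactly that $D{\bf 1}_{q}=0$ plus simplicity of the Laplacian's zero eigenvalue under connectivity give ${\rm null}\,D={\rm range}\,{\bf 1}_{q}$, which makes \eqref{eqn:PBH} trivially hold, after which Theorem~\ref{thm:one} delivers synchronization. Your only addition is spelling out explicitly that the stacked null space in \eqref{eqn:PBH} sits inside ${\rm null}\,D$ independently of $R$ and $\lambda$, which the paper leaves implicit in the word ``trivially.''
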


As mentioned earlier, a collection of identical pendulums
connected (only) by dampers eventually synchronize. What the above
result adds to this statement is that even if we supplement the
collection by springs connecting some pairs of pendulums, the
tendency for synchronization cannot be destroyed.
Corollary~\ref{cor:simple} is hardly surprising. Now we move on to
establishing a less evident result. We begin by defining the
matrix $R_{\Delta}\in\Real^{q\times q}$ as
\begin{eqnarray*}
R_{\Delta} =
\left[\begin{array}{cccc}\sum_{j}{\hat r}_{1j}&-{\hat r}_{12}&\cdots&-{\hat r}_{1q}\\
-{\hat r}_{21}&\sum_{j}{\hat r}_{2j}&\cdots&-{\hat r}_{2q}\\
\vdots&\vdots&\ddots&\vdots\\
-{\hat r}_{q1}&-{\hat r}_{q2}&\cdots&\sum_{j}{\hat r}_{qj}
\end{array}\right]\qquad\mbox{where}\qquad {\hat r}_{ij}=\left\{\begin{array}{ccl}
r_{ij}&\mbox{for}&d_{ij}=0\,,\\
0&\mbox{for}&d_{ij}\neq 0\,.
\end{array}\right.
\end{eqnarray*}
We let $\Gamma_{\Sigma}=(\V,\,\setE_{\Sigma})$ where
$\setE_{\Sigma}=\setE_{\rm r}\cup\setE_{\rm d}$. Likewise,
employing the set difference $\setE_{\Delta}=\setE_{\rm
r}-\setE_{\rm d}$ we define the graph
$\Gamma_{\Delta}=(\V,\,\setE_{\Delta})$. Let $\Gamma_{\Delta}$
have $c$ connected components, which we denote by
$\Gamma_{\ell}=(\V_{\ell},\,\setE_{\ell})$ for
$\ell=1,\,2,\,\ldots,\,c$. By definition all the pairs
$(\V_{\ell},\,\V_{k})$ and $(\setE_{\ell},\,\setE_{k})$ are
disjoint for $\ell\neq k$. Moreover, $\bigcup_{\ell}\V_{\ell}=\V$
and $\bigcup_{\ell}\setE_{\ell}=\setE_{\Delta}$. Note that if
$\Gamma_{\Delta}$ itself is connected then $c=1$ and
$\Gamma_{1}=\Gamma_{\Delta}$. Let $n_{\ell}=|\V_{\ell}|$ be the
number of vertices that belong to $\Gamma_{\ell}$. Without loss of
generality let the vertices $v_{i}$ be such labelled that
$\V_{1}=\{v_{1},\,v_{2},\,\ldots,\,v_{n_{1}}\}$,
$\V_{2}=\{v_{n_{1}+1},\,v_{n_{1}+2},\,\ldots,\,v_{n_{1}+n_{2}}\}$,
and so on. Then $R_{\Delta}$ has the block diagonal form
\begin{eqnarray*}
R_{\Delta}=\left[\begin{array}{cccc}R_{1}&0&\cdots&0\\
0&R_{2}&\cdots&0\\
\vdots&\vdots&\ddots&\vdots\\
0&0&\cdots&R_{c}
\end{array}\right]
\end{eqnarray*}
with $R_{\ell}\in\Real^{n_{\ell}\times n_{\ell}}$. Note that each
$R_{\ell}$ is symmetric positive semidefinite and satisfies
$R_{\ell}{\bf 1}_{n_{\ell}}=0$ meaning there is an eigenvalue at
the origin. Since $\Gamma_{\ell}$ is connected this eigenvalue at
the origin is simple. Hence the eigenvalues of $R_{\ell}$ can be
ordered as
$0=\lambda_{1,\,\ell}<\lambda_{2,\,\ell}\leq\cdots\leq\lambda_{n_{\ell},\,\ell}$.
Now, for each $\ell=1,\,2,\,\ldots,\,c$, define the following
system
\begin{eqnarray}\label{eqn:omega}
{\ddot
\eta}_{\ell}+(\omega_{0}^{2}I_{n_{\ell}}+R_{\ell})\eta_{\ell}=0
\end{eqnarray}
with $\eta_{\ell}=[z_{\sigma_{\ell}+1}\ z_{\sigma_{\ell}+2}\
\cdots\ z_{\sigma_{\ell}+n_{\ell}}]^{T}\in\Real^{n_{\ell}}$ where
$\sigma_{1}=0$ and $\sigma_{\ell}=n_{1}+n_{2}+\cdots+n_{\ell-1}$
for $\ell\geq 2$. Note that we can write $z=[z_{1}\ z_{2}\ \cdots\
z_{q}]^{T}=[\eta_{1}^{T}\ \eta_{2}^{T}\ \cdots\
\eta_{c}^{T}]^{T}$. Let $\Omega_{\ell}=\{\omega>0:
\omega^{2}=\omega_{0}^{2}+\lambda_{k,\,\ell}\,,\
k=1,\,2,\,\ldots,\,n_{\ell}\}$ denote the set of characteristic
frequencies of the system~\eqref{eqn:omega}. Note that the
frequency $\omega_{0}$ belongs to every $\Omega_{\ell}$ because
$\lambda_{1,\,\ell}=0$. Now we list a sufficient set of conditions
guaranteeing synchronization.

\begin{assumption}[P]\label{assume:one}
The harmonic oscillators~\eqref{eqn:harmonic} satisfy the
following conditions.
\begin{enumerate}
\item For all $\ell=1,\,2,\,\ldots,\,c$ the
system~\eqref{eqn:omega} is observable from each $z_{k}$ for all
$k\in\{i:v_{i}\in\V_{\ell}\}$. \item
$\Omega_{1}\cap\Omega_{2}\cap\cdots\cap\Omega_{c}=\{\omega_{0}\}$\,.
\item $\Gamma_{\Sigma}$ is connected.
\end{enumerate}
\end{assumption}

A more mathematical (less physical) version of
Assumption~\ref{assume:one} reads:

\begin{assumeprime}[M]\label{assume:oneprime}
The harmonic oscillators~\eqref{eqn:harmonic} satisfy the
following conditions.
\begin{enumerate}
\item None of the matrices $R_{1},\,R_{2},\,\ldots,\,R_{c}$ has an
eigenvector with a zero entry. \item $\lambda=0$ is the only
common eigenvalue of the matrices
$R_{1},\,R_{2},\,\ldots,\,R_{c}$. \item ${\rm null}\,R\cap{\rm
null}\,D={\rm range}\,{\bf 1}_{q}$.
\end{enumerate}
\end{assumeprime}

\begin{theorem}
The harmonic oscillators~\eqref{eqn:harmonic} synchronize if
Assumption~\ref{assume:one} holds.
\end{theorem}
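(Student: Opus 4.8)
The plan is to invoke Theorem~\ref{thm:one} and reduce the claim to verifying condition~\eqref{eqn:PBH}. I would first translate the physical hypotheses of Assumption~\ref{assume:one} into the linear-algebraic statements of Assumption~\ref{assume:oneprime}, then show that Assumption~\ref{assume:oneprime} forces \eqref{eqn:PBH}. For the translation, the system~\eqref{eqn:omega} is a conservative oscillator $\ddot\eta_\ell+(\omega_0^2 I_{n_\ell}+R_\ell)\eta_\ell=0$; observability from the scalar output $z_k$ holds iff $e_k$ is a cyclic vector for $\omega_0^2 I_{n_\ell}+R_\ell$, which forces every eigenvector of $R_\ell$ to have a nonzero $k$-th entry. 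Requiring this for every $k$ with $v_k\in\V_\ell$ and every $\ell$ yields item~1 of Assumption~\ref{assume:oneprime}. Since $\omega\in\Omega_\ell$ exactly when $\omega^2-\omega_0^2$ is an eigenvalue of $R_\ell$, item~2 (the intersection of the $\Omega_\ell$ equals $\{\omega_0\}$) says that $\lambda=0$ is the only eigenvalue common to all $R_\ell$. Finally, because $\xi^T R\xi$ and $\xi^T D\xi$ are sums of squared differences over $\setE_{\rm r}$ and $\setE_{\rm d}$ respectively, ${\rm null}\,R\cap{\rm null}\,D$ consists of the vectors constant along every edge of $\Gamma_\Sigma$; this equals ${\rm range}\,{\bf 1}_q$ precisely when $\Gamma_\Sigma$ is connected, giving item~3.

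The heart of the argument is the observation that on ${\rm null}\,D$ the matrix $R$ acts as $R_\Delta$. Indeed, if $D\xi=0$ then $\xi_i=\xi_j$ for every dissipative edge, so the terms with $d_{ij}\neq 0$ drop out of $(R\xi)_i=\sum_j r_{ij}(\xi_i-\xi_j)$, leaving exactly $(R_\Delta\xi)_i$. Now take any $\lambda\in\Complex$ and nonzero $\xi$ with $(R-\lambda I_q)\xi=0$ and $D\xi=0$; since $R$ is symmetric positive semidefinite, $\lambda\geq 0$ is real. The identity above gives $R_\Delta\xi=\lambda\xi$, and by the block-diagonal form of $R_\Delta$ the subvector of $\xi$ indexed by the vertices of each component $\V_\ell$ is either zero or an eigenvector of $R_\ell$ for the eigenvalue $\lambda$.

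If $\lambda=0$, then $\xi\in{\rm null}\,R\cap{\rm null}\,D={\rm range}\,{\bf 1}_q$ by item~3, which is what \eqref{eqn:PBH} demands. The remaining case $\lambda>0$ must be excluded. Here I would partition the components into the set $L$ of those whose subvector is nonzero and its complement; $L\neq\emptyset$ since $\xi\neq 0$. Two facts combine: by item~1 a nonzero eigenvector of $R_\ell$ has all entries nonzero, and by $D\xi=0$ equal values are forced across dissipative edges. Hence a dissipative edge joining an $L$-component to a non-$L$-component would equate a nonzero entry with a zero one, which is impossible; so no $\setE_\Sigma$-edge crosses between $L$ and its complement. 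But any edge of $\Gamma_\Sigma$ running between distinct components of $\Gamma_\Delta$ must be dissipative, since a restorative-only edge would lie in $\setE_\Delta$ and keep its endpoints in a single component. Connectivity of $\Gamma_\Sigma$ (item~3) then forces the complement to be empty, so every $R_\ell$ has $\lambda$ as an eigenvalue; by item~2 this gives $\lambda=0$, contradicting $\lambda>0$. Thus no such $\xi$ exists, \eqref{eqn:PBH} holds, and Theorem~\ref{thm:one} yields synchronization.

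I expect the case $\lambda>0$ to be the main obstacle, specifically the interplay of the three items: the no-zero-entry property (item~1) is what lets a single nonzero block ``infect'' its dissipative neighbors, connectivity of $\Gamma_\Sigma$ (item~3) propagates this to all blocks, and only then does the common-eigenvalue condition (item~2) deliver the contradiction. The subtlety to get right is that inter-component edges of $\Gamma_\Sigma$ are necessarily dissipative, since this is exactly what lets the constraint $D\xi=0$ bridge the blocks of $R_\Delta$.
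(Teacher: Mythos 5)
Your proof is correct and takes essentially the same route as the paper: reduce to condition~\eqref{eqn:PBH} via Theorem~\ref{thm:one}, observe that $R\xi=R_{\Delta}\xi$ whenever $D\xi=0$, and exploit the block-diagonal structure of $R_{\Delta}$ together with the three conditions to exclude an eigenvector $\xi\notin{\rm range}\,{\bf 1}_{q}$ satisfying $D\xi=0$. The only differences are cosmetic: you make the translation of Assumption~\ref{assume:one} into its algebraic form explicit (the paper states it as Assumption~\ref{assume:oneprime} but does not prove the equivalence), and you arrange the final contradiction in the opposite order --- connectivity of $\Gamma_{\Sigma}$ forces every block of $\xi$ to be nonzero and then condition~2 yields $\lambda=0$, whereas the paper uses condition~2 to produce a zero block and then contradicts condition~3 by constructing an explicit indicator vector in ${\rm null}\,R\cap{\rm null}\,D$.
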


\begin{proof}
If we can show that Assumption~\ref{assume:one} implies
condition~\eqref{eqn:PBH} then by Theorem~\ref{thm:one} the
oscillators must synchronize. Let us establish the implication by
contradiction. Suppose that \eqref{eqn:PBH} is not true but
Assumption~\ref{assume:one} holds. Then we can find an eigenvector
$\xi\notin{\rm range}\,{\bf 1}_{q}$ satisfying $D\xi=0$ and
$(R-\lambda I_{q})\xi=0$ for some $\lambda$. This eigenvalue
$\lambda$ cannot be zero for then we have $\xi\in{\rm
null}\,R\cap{\rm null}\,D$ which contradicts the third condition
of Assumption~\ref{assume:one}. Let us therefore study the case
$\lambda\neq 0$ in the sequel.

Let us employ the partitions $\xi=[z_{1}\ z_{2}\ \cdots\
z_{q}]^{T}=[\eta_{1}^{T}\ \eta_{2}^{T}\ \cdots\
\eta_{c}^{T}]^{T}$. Since $D\xi=0$ we can write
$0=\xi^{T}D\xi=\sum_{j>i}d_{ij}(z_{i}-z_{j})^{2}$ which implies
that for a given pair $(i,\,j)$ of indices either $d_{ij}=0$ or
$z_{i}=z_{j}$. Since $d_{ij}=0$ means ${\hat r}_{ij}=r_{ij}$ we
have ${\hat r}_{ij}(z_{i}-z_{j})=r_{ij}(z_{i}-z_{j})$ for all
$i,\,j$. Now we can proceed to claim $R\xi=R_{\Delta}\xi$ because
\begin{eqnarray*}
R\xi=\left[\begin{array}{c}\sum_{j}r_{1j}(z_{1}-z_{j})\\
\vdots\\\sum_{j}r_{qj}(z_{q}-z_{j})\end{array}\right]=\left[\begin{array}{c}\sum_{j}{\hat r}_{1j}(z_{1}-z_{j})\\
\vdots\\\sum_{j}{\hat
r}_{qj}(z_{q}-z_{j})\end{array}\right]=R_{\Delta}\xi\,.
\end{eqnarray*}
Therefore $R_{\Delta}\xi=\lambda\xi$. Then the block diagonal
structure of $R_{\Delta}$ allows us to write
$R_{\ell}\eta_{\ell}=\lambda\eta_{\ell}$ for all
$\ell=1,\,2,\,\ldots,\,c$. Since by assumption nonzero eigenvalue
$\lambda$ is not common to all $R_{\ell}$ we should have
$\eta_{\ell}=0$ for at least one index $\ell$. Also, again by
assumption, no entry of $\eta_{\ell}$ can be zero whenever
$\eta_{\ell}\neq 0$. That is, if $\eta_{\ell}\neq 0$ then
$z_{k}\neq 0$ for all $k\in\{i:v_{i}\in\V_{\ell}\}$. Let us now
define two (nonempty) sets of indices
$\I:=\{i:v_{i}\in\V_{\ell},\,\eta_{\ell}\neq
0,\,\ell=1,\,2,\,\ldots,\,c\}$ and its complement
$\J:=\{1,\,2,\,\ldots,\,q\}-\I$. Note that $z_{i}\neq 0$ for
$i\in\I$ and $z_{i}=0$ for $i\in\J$. For any pair of indices
$(i,\,j)$ with $i\in\I$ and $j\in\J$ we can assert the following.
(i) $d_{ij}=0$. Because $z_{i}-z_{j}\neq 0$ and $0=\xi^{T}D\xi\geq
d_{ij}(z_{i}-z_{j})^{2}$. (ii) ${\hat r}_{ij}=0$. Because, by how
we constructed the sets $\I$ and $\J$, the vertices $v_{i}$ and
$v_{j}$ cannot belong to the same vertex set $\V_{\ell}$. Then by
the block diagonal form of $R_{\Delta}$ the entry ${\hat r}_{ij}$
must be zero. (iii) $r_{ij}=0$. Because $d_{ij}=0$ means
$r_{ij}={\hat r}_{ij}$.

Construct the vector ${\hat\xi}=[{\hat z}_{1}\ {\hat z}_{2}\
\cdots\ {\hat z}_{q}]^{T}$ with entries ${\hat z}_{i}=0$ for
$i\in\I$ and ${\hat z}_{i}=1$ for $i\in\J$. Clearly,
$\hat\xi\notin{\rm range}\,{\bf 1}_{q}$. We can write
\begin{eqnarray*}
{\hat\xi}^{T}D{\hat\xi}&=&\sum_{j>i}d_{ij}({\hat z}_{i}-{\hat
z}_{j})^{2}\\ &=&\frac{1}{2}\sum_{i,j\in\I}d_{ij}({\hat
z}_{i}-{\hat z}_{j})^{2}+\sum_{i\in\I,\,j\in\J}d_{ij}({\hat
z}_{i}-{\hat z}_{j})^{2}+\frac{1}{2}\sum_{i,j\in\J}d_{ij}({\hat
z}_{i}-{\hat
z}_{j})^{2}\\
&=&\frac{1}{2}\sum_{i,j\in\I}d_{ij}(0-0)^{2}+\frac{1}{2}\sum_{i,j\in\J}d_{ij}(1-1)^{2}\\
&=&0
\end{eqnarray*}
where we used the fact that $d_{ij}=0$ when $i\in\I$ and $j\in\J$.
Then ${\hat\xi}^{T}D{\hat\xi}=0$ implies $D{\hat\xi}=0$ because
$D$ is symmetric positive semidefinite. Since we also have that
$r_{ij}=0$ when $i\in\I$ and $j\in\J$, we can similarly establish
$R{\hat\xi}=0$. Hence $\hat\xi\in{\rm null}\,R\cap{\rm null}\,D$.
But $\hat\xi\notin{\rm range}\,{\bf 1}_{q}$. This contradicts the
third condition of Assumption~\ref{assume:one}.
\end{proof}

\vspace{0.12in}

As stated earlier, Assumption~\ref{assume:one} is only sufficient
for synchronization. Hence if an assembly of harmonic oscillators
fail to synchronize, at least one of the three conditions listed
therein must not hold. Of those three conditions, the necessity of
the third one (that $\Gamma_{\Sigma}$ is connected) is evident.
However, the relation of the remaining two conditions to
synchronization is subtle and requires due attention. To better
understand the meanings of those conditions we now provide two
examples, where harmonic oscillators do not synchronize. Each
example violates one of the first two conditions of
Assumption~\ref{assume:one}.

\begin{example}\label{ex:one}
Consider the following four coupled harmonic oscillators
\begin{eqnarray*}
&&{\ddot z}_{1}+\omega_{0}^{2}z_{1}+r(z_{1}-z_{2})=0\\
&&{\ddot z}_{2}+\omega_{0}^{2}z_{2}+d({\dot z}_{2}-{\dot z}_{4})+r(z_{2}-z_{1})+r(z_{2}-z_{3})=0\\
&&{\ddot z}_{3}+\omega_{0}^{2}z_{3}+r(z_{3}-z_{2})=0\\
&&{\ddot z}_{4}+\omega_{0}^{2}z_{4}+d({\dot z}_{4}-{\dot z}_{2})=0
\end{eqnarray*}
where $\omega_{0},\,d,\,r>0$. The associated $D$ and $R$ matrices
are provided below.
\begin{eqnarray*}
D=\left[\begin{array}{rrrr}0&0&0&0\\0&d&0&-d\\0&0&0&0\\0&-d&0&d\end{array}\right]
\,,\qquad
R=\left[\begin{array}{rrrr}r&-r&0&0\\-r&2r&-r&0\\0&-r&r&0\\0&0&0&0\end{array}\right]\,.
\end{eqnarray*}
It turns out that for the pair $(D,\,R)$ condition~\eqref{eqn:PBH}
fails to hold and hence by Theorem~\ref{thm:one} the oscillators
do not synchronize. In particular,
\begin{eqnarray*}
{\rm null}\left[\begin{array}{c}R-rI_{q}\\
D\end{array}\right] = {\rm range}
\left[\begin{array}{r}1\\0\\-1\\0\end{array}\right]\not\subset
{\rm range} \left[\begin{array}{r}1\\1\\1\\1\end{array}\right]\,.
\end{eqnarray*}
The solution corresponding to the eigenvector $[1\ 0\ -1\ 0]^{T}$
is shown in Fig.~\ref{fig:unobs} where the first and third
pendulums oscillate (with $\pi$ radians of phase difference) at
frequency $\omega=\sqrt{\omega_{0}^{2}+r}$ while the second and
fourth oscillators sit still. (We note that the two springs are
identical.)
\begin{figure}[h]
\begin{center}
\includegraphics[scale=0.55]{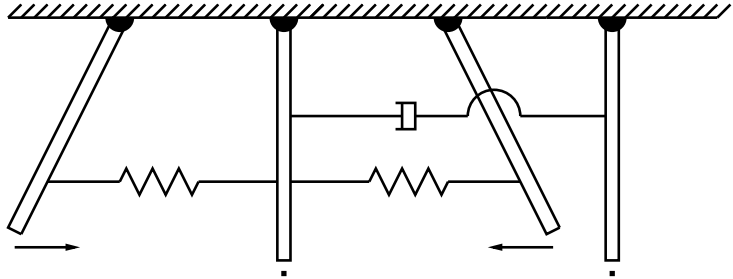}
\caption{Pendulums out of synchrony due to lack of observability.}
\label{fig:unobs}
\end{center}
\end{figure}
Let us now figure out which condition(s) of
Assumption~\ref{assume:one} is violated for our example. First we
consider the interconnection. The graphs $\Gamma_{\Sigma}$ and
$\Gamma_{\Delta}$ are given in Fig.~\ref{fig:graphs1}. Since the
graph $\Gamma_{\Sigma}$ is connected, the third condition of
Assumption~\ref{assume:one} is satisfied.
\begin{figure}[h]
\begin{center}
\includegraphics[scale=0.55]{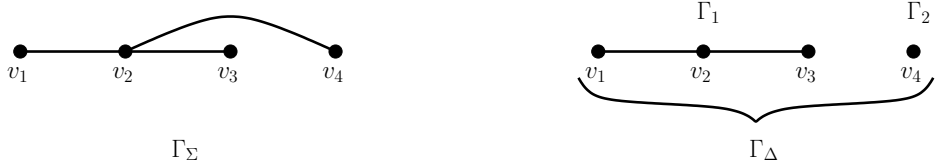}
\caption{The graphs associated to the coupled harmonic oscillators
in Example~\ref{ex:one}.}\label{fig:graphs1}
\end{center}
\end{figure}
Note that $\Gamma_{\Delta}$ has two components: $\Gamma_{1}$ and
$\Gamma_{2}$, the latter being a single vertex. Related to these
graphs are the matrices $R_{1}\in\Real^{3\times 3}$ and
$R_{2}\in\Real^{1\times 1}$. We have $R_{2}=0$ since $\Gamma_{2}$
has no edges. The matrix $R_{1}$ on the other hand has the
following form
\begin{eqnarray*}
R_{1}=\left[\begin{array}{rrr}r&-r&0\\-r&2r&-r\\0&-r&r\end{array}\right]\,.
\end{eqnarray*}
For the graph $\Gamma_{2}$ the system~\eqref{eqn:omega} simply
reads ${\ddot\eta}_{2}+\omega_{0}^{2}\eta_{2}=0$ where
$\eta_{2}=z_{4}\in\Real$. Hence the associated set of
characteristic frequencies is singleton
$\Omega_{2}=\{\omega_{0}\}$. The system associated to $\Gamma_{1}$
reads ${\ddot\eta}_{1}+(\omega_{0}^{2}I_{3}+R_{1})\eta_{1}=0$
where $\eta_{1}=[z_{1}\ z_{2}\ z_{3}]^{T}\in\Real^{3}$. The set of
eigenvalues of $R_{1}$ being $\{0,\,r,\,3r\}$, we have
$\Omega_{1}=\{\omega_{0},\,\sqrt{\omega_{0}^{2}+r},\,\sqrt{\omega_{0}^{2}+3r}\}$.
Now we can write $\Omega_{1}\cap\Omega_{2}=\{\omega_{0}\}$.
Therefore the second condition of Assumption~\ref{assume:one} is
also satisfied. Since the second and third conditions hold, the
first condition must not (because the oscillators do not
synchronize). The system
${\ddot\eta}_{2}+\omega_{0}^{2}\eta_{2}=0$ is clearly observable
from $z_{4}$. Therefore the other system
${\ddot\eta}_{1}+(\omega_{0}^{2}I_{3}+R_{1})\eta_{1}=0$ must be
unobservable from at least one of its states $z_{k}$,
$k\in\{1,\,2,\,3\}$. It can be shown that from $z_{2}$ the system
is indeed unobservable. This finding is not at all surprising when
we look at the solution depicted in Fig.~\ref{fig:unobs}.
\end{example}

\begin{remark}
The situation shown in Fig.~\ref{fig:unobs} not only renders the
first condition of Assumption~\ref{assume:one} more meaningful but
also suggests a refinement on it. If the system in
Fig.~\ref{fig:unobs} were slightly modified by relocating the
damper between the third and fourth pendulums (as opposed to the
original configuration where it connects the second and fourth
pendulums) the observability condition of
Assumption~\ref{assume:one} would still be violated yet the
pendulums would this time synchronize. The reason is that even the
component~\eqref{eqn:omega} described by the first three pendulums
is unobservable from the second pendulum, it nevertheless is
observable from the third. And the significance of the third
pendulum is that it is through it that the first component (in the
modified system) is connected via damper to the second component
(namely, to the fourth pendulum). One can carry this observation
further so as to suggest the following relaxation of the first
condition of Assumption~\ref{assume:one}: ``For all
$\ell=1,\,2,\,\ldots,\,c$ the system~\eqref{eqn:omega} is
observable from each $z_{k}$ for all
$k\in\{i:v_{i}\in\V_{\ell},\,d_{ij}\neq 0,\,j\notin\V_{\ell}\}$.''
\end{remark}

\begin{example}\label{ex:two}
Consider the following four coupled harmonic oscillators
\begin{eqnarray*}
&&{\ddot z}_{1}+\omega_{0}^{2}z_{1}+r(z_{1}-z_{2})=0\\
&&{\ddot z}_{2}+\omega_{0}^{2}z_{2}+d({\dot z}_{2}-{\dot z}_{3})+r(z_{2}-z_{1})=0\\
&&{\ddot z}_{3}+\omega_{0}^{2}z_{3}+d({\dot z}_{3}-{\dot z}_{2})+r(z_{3}-z_{4})=0\\
&&{\ddot z}_{4}+\omega_{0}^{2}z_{4}+r(z_{4}-z_{3})=0
\end{eqnarray*}
where $\omega_{0},\,d,\,r>0$. The associated $D$ and $R$ matrices
are provided below.
\begin{eqnarray*}
D=\left[\begin{array}{rrrr}0&0&0&0\\0&d&-d&0\\0&-d&d&0\\0&0&0&0\end{array}\right]
\,,\qquad
R=\left[\begin{array}{rrrr}r&-r&0&0\\-r&r&0&0\\0&0&r&-r\\0&0&-r&r\end{array}\right]\,.
\end{eqnarray*}
It turns out that for the pair $(D,\,R)$ condition~\eqref{eqn:PBH}
fails to hold and hence by Theorem~\ref{thm:one} the oscillators
do not synchronize. In particular,
\begin{eqnarray*}
{\rm null}\left[\begin{array}{c}R-2rI_{q}\\
D\end{array}\right] = {\rm range}
\left[\begin{array}{r}1\\-1\\-1\\1\end{array}\right]\not\subset
{\rm range} \left[\begin{array}{r}1\\1\\1\\1\end{array}\right]\,.
\end{eqnarray*}
The solution corresponding to the eigenvector $[1\ -1\ -1\ 1]^{T}$
is shown in Fig.~\ref{fig:extra} where the first and fourth
pendulums make a synchronized pair and the second and third
pendulums make another synchronized pair. These pairs oscillate
(with $\pi$ radians of phase difference between pairs) at
frequency $\omega=\sqrt{\omega_{0}^{2}+2r}$. (We note that the two
springs are identical.)
\begin{figure}[h]
\begin{center}
\includegraphics[scale=0.55]{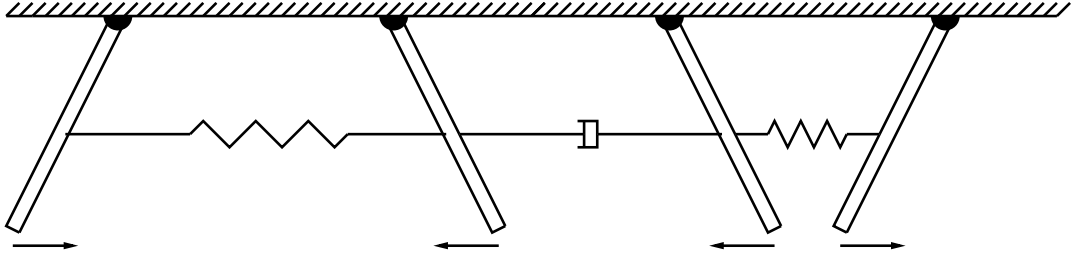}
\caption{Pendulums out of synchrony due to extra common
frequencies.}\label{fig:extra}
\end{center}
\end{figure}
Let us now figure out which condition(s) of
Assumption~\ref{assume:one} is violated here. The graphs
$\Gamma_{\Sigma}$ and $\Gamma_{\Delta}$ are given in
Fig.~\ref{fig:graphs2}. Since the graph $\Gamma_{\Sigma}$ is
connected, the third condition of Assumption~\ref{assume:one} is
satisfied.
\begin{figure}[h]
\begin{center}
\includegraphics[scale=0.55]{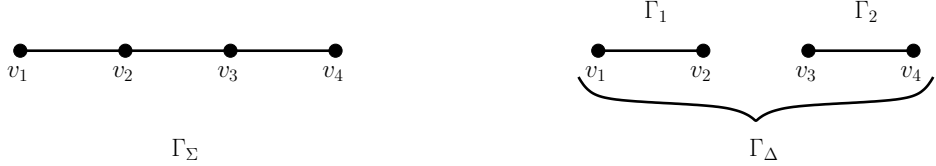}
\caption{The graphs associated to the coupled harmonic oscillators
in Example~\ref{ex:two}.}\label{fig:graphs2}
\end{center}
\end{figure}
Corresponding to the two components $\Gamma_{1}$ and $\Gamma_{2}$
are the matrices
\begin{eqnarray*}
R_{1}=R_{2}=\left[\begin{array}{rr}r&-r\\-r&r\end{array}\right]\,.
\end{eqnarray*}
For the graph $\Gamma_{1}$ the system~\eqref{eqn:omega} reads
${\ddot\eta}_{1}+(\omega_{0}^{2}I_{2}+R_{1})\eta_{1}=0$ where
$\eta_{1}=[z_{1}\ z_{2}]^{T}\in\Real^{2}$. It can be shown that
this system is observable from each $z_{k}$, $k=1,\,2$. Since
$R_{2}=R_{1}$, the same argument is valid also for the
system~\eqref{eqn:omega} associated to $\Gamma_{2}$. Therefore the
first condition of Assumption~\ref{assume:one} is also satisfied.
This implies that the second condition cannot hold. Let us verify
that the second condition does not hold. The set of eigenvalues of
$R_{1}$ being $\{0,\,2r\}$, we have
$\Omega_{1}=\{\omega_{0},\,\sqrt{\omega_{0}^{2}+2r}\}$. The
equality $R_{2}=R_{1}$ implies $\Omega_{2}=\Omega_{1}$. Hence
$\Omega_{1}\cap\Omega_{2}=\{\omega_{0},\,\sqrt{\omega_{0}^{2}+2r}\}\neq\{\omega_{0}\}$
as expected.
\end{example}

\section{Electrical networks}

\begin{figure}[h]
\begin{center}
\includegraphics[scale=0.55]{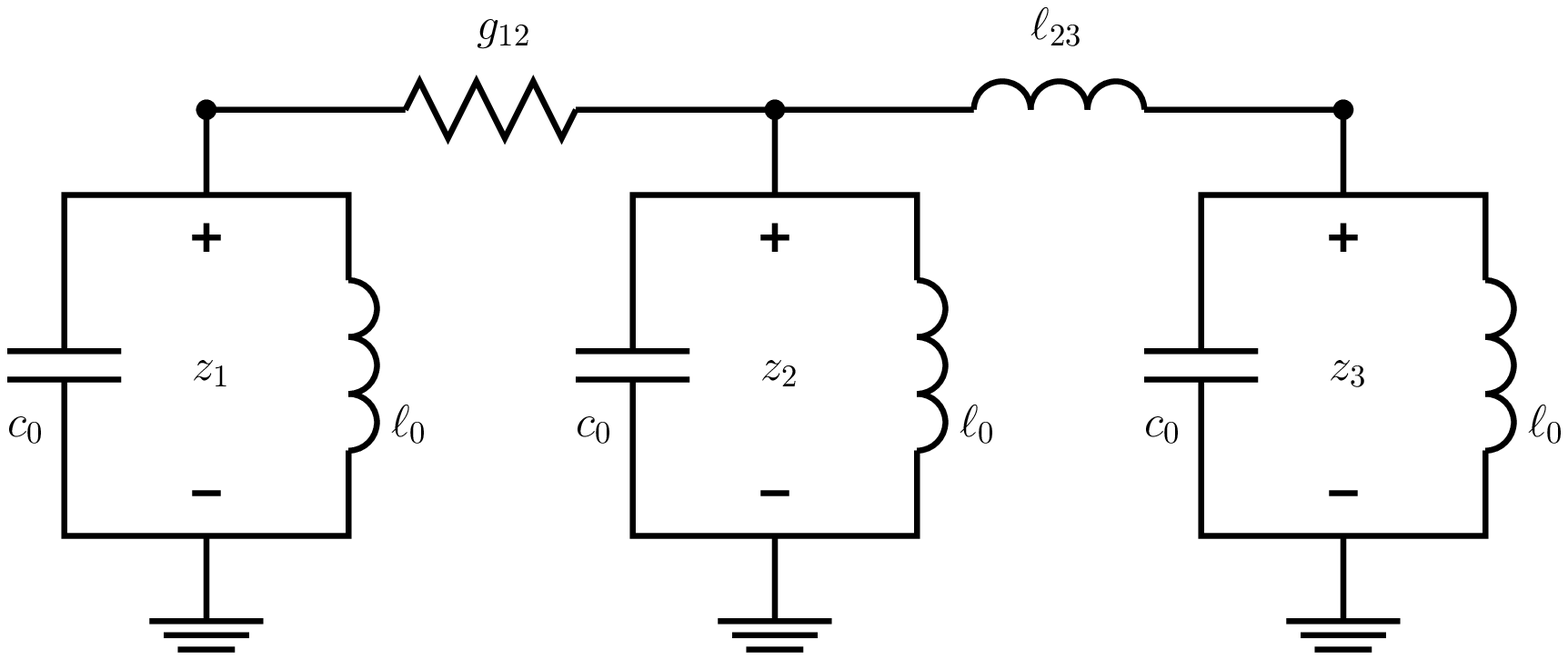}
\caption{Three coupled LC oscillators.}\label{fig:threeLC}
\end{center}
\end{figure}

Hitherto the coupled pendulums were the workhorse in our study of
the dynamics~\eqref{eqn:harmonic}. There are however other
important systems that share the same model; for instance, coupled
electrical oscillators. Consider an array of identical LC
oscillators where certain pairs $(i,\,j)$ are coupled via an LTI
resistor (with conductance $g_{ij}$) or an LTI inductor (with
inductance $\ell_{ij}$) or both; see Fig.~\ref{fig:threeLC}.
Writing Kirchhoff's Current Law (KCL) at each node then yields the
dynamics~\eqref{eqn:harmonic} in terms of {\em electrical}
parameters:
\begin{eqnarray}\label{eqn:harmonicLC}
c_{0}{\ddot z}_{i}+\ell_{0}^{-1}z_{i}+\sum_{j=1}^{q}g_{ij}({\dot
z}_{i}-{\dot z}_{j})+\sum_{j=1}^{q}h_{ij}(z_{i}-z_{j})=0\,,\qquad
i=1,\,2,\,\ldots,\,q
\end{eqnarray}
where $z_{i}$ denote the node voltages, $c_{0}>0$ and $\ell_{0}>0$
are respectively the capacitance and inductance of an individual
oscillator, and $h_{ij}=\ell_{ij}^{-1}$ when there is an inductor
(i.e., $\ell_{ij}\neq 0$) that connects the $i$th and $j$th nodes
and $h_{ij}=0$ otherwise. We work with passive components, i.e.,
$g_{ij}=g_{ji}\geq 0$ and $h_{ij}=h_{ji}\geq 0$. We take
$g_{ii}=0$ and $h_{ii}=0$. Note that the interconnection of such
an array can be represented by the admittance matrix
\begin{eqnarray}\label{eqn:Y}
Y(s) =
\left[\begin{array}{cccc}\sum_{j}y_{1j}(s)&-y_{12}(s)&\cdots&-y_{1q}(s)\\
-y_{21}(s)&\sum_{j}y_{2j}(s)&\cdots&-y_{2q}(s)\\
\vdots&\vdots&\ddots&\vdots\\
-y_{q1}(s)&-y_{q2}(s)&\cdots&\sum_{j}y_{qj}(s)
\end{array}\right]
\end{eqnarray}
where $y_{ij}(s)=g_{ij}+(\ell_{ij}s)^{-1}=g_{ij}+h_{ij}/s$ is the
admittance of the coupling between the $i$th and $j$th nodes.
Define the symmetric positive semidefinite matrices
\begin{eqnarray*}
G =
\left[\begin{array}{cccc}\sum_{j}g_{1j}&-g_{12}&\cdots&-g_{1q}\\
-g_{21}&\sum_{j}g_{2j}&\cdots&-g_{2q}\\
\vdots&\vdots&\ddots&\vdots\\
-g_{q1}&-g_{q2}&\cdots&\sum_{j}g_{qj}
\end{array}\right]\,,\qquad
H =
\left[\begin{array}{cccc}\sum_{j}h_{1j}&-h_{12}&\cdots&-h_{1q}\\
-h_{21}&\sum_{j}h_{2j}&\cdots&-h_{2q}\\
\vdots&\vdots&\ddots&\vdots\\
-h_{q1}&-h_{q2}&\cdots&\sum_{j}h_{qj}
\end{array}\right]\,.
\end{eqnarray*}
Note that $Y(s)=G+s^{-1}H$. Also note that $G/c_{0}$ and $H/c_{0}$
correspond to the matrices $D$ and $R$ of the
array~\eqref{eqn:harmonic}. In other words, $G$ represents the
dissipative coupling and $H$ the restorative coupling. Given a
matrix $A\in\Complex^{n\times n}$ let now $\lambda_{k}(A)$ denote
the $k$th smallest eigenvalue of $A$ with respect to the real
part. That is, ${\rm Re}\,\lambda_{1}(A)\leq{\rm
Re}\,\lambda_{2}(A)\leq\cdots\leq{\rm Re}\,\lambda_{n}(A)$. By
Corollary~\ref{cor:simple} we can then state that the LC
oscillators~\eqref{eqn:harmonicLC} synchronize if
$\lambda_{2}(G)>0$, i.e., if the dissipative coupling graph is
connected. Note that the condition $\lambda_{2}(G)>0$ is only
sufficient when $H\neq 0$. Now we point out an interesting
extension of this inequality, which turns out to manifest itself
in terms of the admittance matrix:

\begin{theorem}\label{thm:complexL}
The LC oscillators~\eqref{eqn:harmonicLC} synchronize if and only
if
\begin{eqnarray}\label{eqn:PBH3}
{\rm Re}\,\lambda_{2}(Y(j\omega))>0\ \mbox{for all}\ \omega>0\,.
\end{eqnarray}
\end{theorem}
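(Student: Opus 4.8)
The plan is to reduce the claim to condition~\eqref{eqn:PBH} via Theorem~\ref{thm:one} and then translate~\eqref{eqn:PBH} into a spectral statement about $Y(j\omega)$. Since the array~\eqref{eqn:harmonicLC} is an instance of~\eqref{eqn:harmonic} with $D=G/c_{0}$, $R=H/c_{0}$ and $\omega_{0}^{2}=1/(c_{0}\ell_{0})$, Theorem~\ref{thm:one} tells us that synchronization is equivalent to~\eqref{eqn:PBH} for the pair $(H/c_{0},\,G/c_{0})$; after rescaling the spectral parameter this is the same as requiring ${\rm null}\,(H-\mu I_{q})\cap{\rm null}\,G\subset{\rm range}\,{\bf 1}_{q}$ for every $\mu\in\Complex$. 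Writing $Y(j\omega)=G-j\omega^{-1}H$, I would first record that ${\bf 1}_{q}$ is always an eigenvector with eigenvalue $0$ (because $G{\bf 1}_{q}=H{\bf 1}_{q}=0$), and that for any eigenpair $Y(j\omega)v=\lambda v$ the identity $\lambda=(v^{*}Gv-j\omega^{-1}v^{*}Hv)/\|v\|^{2}$ together with $G,H\succeq 0$ forces ${\rm Re}\,\lambda=v^{*}Gv/\|v\|^{2}\geq 0$. Hence all eigenvalues lie in the closed right half-plane and ${\rm Re}\,\lambda_{1}(Y(j\omega))=0$.

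The next step is to characterize the eigenvalues on the imaginary axis. From the same identity, ${\rm Re}\,\lambda=0$ holds iff $v^{*}Gv=0$, i.e. iff $Gv=0$ (as $G$ is real symmetric and $\succeq 0$), and then $Y(j\omega)v=-j\omega^{-1}Hv=\lambda v$ forces $Hv=\mu v$ with $\mu:=j\omega\lambda\geq 0$ real. Conversely, every $v$ with $Gv=0$ and $Hv=\mu v$ is an eigenvector of $Y(j\omega)$ with eigenvalue $-j\mu/\omega$. Thus the imaginary-axis eigenspace of $Y(j\omega)$ is exactly $\bigoplus_{\mu}({\rm null}\,G\cap{\rm null}\,(H-\mu I_{q}))$, the sum running over the (real, nonnegative) eigenvalues $\mu$ of $H$. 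Crucially, this space does not depend on $\omega$: changing $\omega$ only slides each such eigenvalue along the imaginary axis without creating or destroying it.

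The main obstacle is that $Y(j\omega)$ is complex symmetric but not normal, so I must rule out Jordan blocks on the imaginary axis before I may count multiplicities geometrically. I would do this by showing each imaginary-axis eigenvector $v$ is also a left eigenvector: since $Gv=0$ gives $v^{*}G=0$ and $Hv=\mu v$ with real $\mu$ gives $v^{*}H=\mu v^{*}$, we obtain $v^{*}(Y(j\omega)-\lambda I_{q})=0$. If a generalized eigenvector $w$ existed with $(Y(j\omega)-\lambda I_{q})w=v$, then $0=v^{*}(Y(j\omega)-\lambda I_{q})w=v^{*}v=\|v\|^{2}$, forcing $v=0$, a contradiction. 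Hence every imaginary-axis eigenvalue is semisimple, so its algebraic and geometric multiplicities both equal $\dim({\rm null}\,G\cap{\rm null}\,(H-\mu I_{q}))$.

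Finally I would assemble the pieces. Because ${\rm range}\,{\bf 1}_{q}$ always sits inside the imaginary-axis eigenspace (it is the $\mu=0$ summand), ${\rm Re}\,\lambda_{2}(Y(j\omega))>0$ is equivalent to that eigenspace being one-dimensional, i.e. equal to ${\rm range}\,{\bf 1}_{q}$. Decomposing over $\mu$, this means ${\rm null}\,G\cap{\rm null}\,H={\rm range}\,{\bf 1}_{q}$ together with ${\rm null}\,G\cap{\rm null}\,(H-\mu I_{q})=\{0\}$ for every eigenvalue $\mu>0$ of $H$, which is precisely condition~\eqref{eqn:PBH} for the pair $(H,\,G)$. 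Since the imaginary-axis eigenspace is independent of $\omega$, the inequality~\eqref{eqn:PBH3} either holds at every $\omega>0$ or at none, so the quantifier over $\omega$ comes for free; combined with Theorem~\ref{thm:one} this yields the stated equivalence.
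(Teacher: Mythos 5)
Your proposal is correct, and its skeleton is the same as the paper's: reduce synchronization to condition~\eqref{eqn:PBH} for the pair $(H,\,G)$ via Theorem~\ref{thm:one}, then relate that condition to the spectrum of $Y(j\omega)=G-j\omega^{-1}H$ through the identity $\lambda=\xi^{*}G\xi-j\xi^{*}H\xi/\omega$ for unit eigenvectors. Where you differ is in the organization of the spectral step. The paper argues both directions contrapositively with an ad hoc case split on $\lambda_{2}(Y(j\omega))$: if ${\rm Re}\,\lambda_{2}=0$ it treats ${\rm Im}\,\lambda_{2}=0$ (ruling out a Jordan block at the origin by the generalized-eigenvector contradiction $q={\bf 1}_{q}^{T}Y(j\omega)\xi=0$) separately from ${\rm Im}\,\lambda_{2}<0$ (where an eigenvector exists automatically). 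You instead prove a single stronger structural lemma: the imaginary-axis eigenspace of $Y(j\omega)$ equals $\bigoplus_{\mu}\bigl({\rm null}\,G\cap{\rm null}\,(H-\mu I_{q})\bigr)$, every imaginary-axis eigenvalue is semisimple (your left-eigenvector argument, which generalizes the paper's trick at $0$ to all such eigenvalues, using $v^{*}G=0$ and $v^{*}H=\mu v^{*}$ in place of ${\bf 1}_{q}^{T}Y=0$ with symmetry of $Y$), and this eigenspace is independent of $\omega$. This buys you two things the paper leaves implicit: the equivalence of algebraic and geometric multiplicity counts needed to read ${\rm Re}\,\lambda_{2}>0$ as a statement about a single subspace, and the observation that \eqref{eqn:PBH3} holds at one $\omega>0$ iff it holds at all $\omega>0$, so the universal quantifier is vacuous; the paper's version is shorter but only establishes semisimplicity where it is forced to, and re-derives the eigenvector data separately in each direction. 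One small bookkeeping point in your favor: you handle general $c_{0}$ by rescaling the spectral parameter, whereas the paper normalizes $c_{0}=1$ without loss of generality; both are fine.
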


\begin{proof}
Without loss of generality take $c_{0}=1$. Then $G$ and $H$
correspond to the matrices $D$ and $R$ of the
array~\eqref{eqn:harmonic}. Hence, by Theorem~\ref{thm:one}, the
LC oscillators~\eqref{eqn:harmonicLC} synchronize if and only if
\begin{eqnarray}\label{eqn:PBH4}
{\rm null}\left[\begin{array}{c}H-\lambda I_{q}\\
G\end{array}\right]\subset {\rm range}\,{\bf 1}_{q}\ \mbox{for
all}\ \lambda\in\Complex\,.
\end{eqnarray}
Our task therefore reduces to establishing the equivalence of
\eqref{eqn:PBH3} and \eqref{eqn:PBH4}, where
$Y(j\omega)=G+(j\omega)^{-1}H$. Let $\lambda\in\Complex$ be an
eigenvalue of $Y(j\omega)$ and $\xi\in\Complex^{q}$ be the
corresponding unit eigenvector, i.e., $Y(j\omega)\xi=\lambda\xi$
and $\|\xi\|^{2}=\xi^{*}\xi=1$, where $\xi^{*}$ is the conjugate
transpose of $\xi$. We can write
\begin{eqnarray*}
\lambda&=&\xi^{*}Y(j\omega)\xi\\
&=&\xi^{*}(G-j\omega^{-1}H)\xi\\
&=&\xi^{*}G\xi-j\xi^{*}H\xi/\omega\,.
\end{eqnarray*}
Since both $G$ and $H$ are symmetric positive semidefinite
matrices we have ${\rm Im}\,\lambda=-\xi^{*}H\xi/\omega\leq 0$ and
${\rm Re}\,\lambda=\xi^{*}G\xi\geq 0$. Therefore no eigenvalue of
$Y(j\omega)$ can be on the open left half-plane. Also note that by
construction $G{\bf 1}_{q}=0$ and $H{\bf 1}_{q}=0$. Therefore
$Y(j\omega){\bf 1}_{q}=0$ and we can let
$\lambda_{1}(Y(j\omega))=0$ for all $\omega>0$.

Suppose now \eqref{eqn:PBH3} fails. This means that ${\rm
Re}\,\lambda_{2}(Y(j\omega))=0$ for some $\omega>0$. There are two
possibilities, one of which is: (i) ${\rm
Im}\,\lambda_{2}(Y(j\omega))=0$. In this case the eigenvalue at
the origin is repeated and it must have at least two eigenvectors.
(Otherwise ${\bf 1}_{q}$ would be the only eigenvector for the
eigenvalue at the origin and there would exist a generalized
eigenvector $\xi\in\Complex^{q}$ satisfying $Y(j\omega)\xi={\bf
1}_{q}$. But such $\xi$ could not exist because it would lead to
the following contradiction: $q={\bf 1}_{q}^{T}{\bf 1}_{q}={\bf
1}_{q}^{T}Y(j\omega)\xi=(Y(j\omega){\bf 1}_{q})^{T}\xi=0$.)
Therefore we can find $\xi_{2}\notin{\rm range}\,{\bf 1}_{q}$
satisfying $Y(j\omega)\xi_{2}=0$. This implies
$0=\xi_{2}^{*}Y(j\omega)\xi_{2}=\xi_{2}^{*}G\xi_{2}-j\xi_{2}^{*}H\xi_{2}/\omega$.
Since both $G$ and $H$ are symmetric positive semidefinite
matrices we can deduce $G\xi_{2}=0$ and $H\xi_{2}=0$. That is,
${\rm null}\,G\cap{\rm null}\,H\supset{\rm span}\,\{\xi_{2}\}$.
Thus \eqref{eqn:PBH4} fails. Let us now consider the other
possibility: (ii) ${\rm Im}\,\lambda_{2}(Y(j\omega))<0$. Then we
can write $\lambda_{2}(Y(j\omega))=-j\beta_{2}$ for some
$\beta_{2}>0$. Let $\xi_{2}\in\Complex^{q}$ be the corresponding
unit eigenvector, i.e., $Y(j\omega)\xi_{2}=-j\beta_{2}\xi_{2}$ and
$\xi_{2}^{*}\xi_{2}=1$. Clearly, $\xi_{2}\notin{\rm range}\,{\bf
1}_{q}$. We can write $-j\beta_{2}=\xi_{2}^{*}Y(j\omega)\xi_{2}
=\xi_{2}^{*}G\xi_{2}-j\xi_{2}^{*}H\xi_{2}/\omega$. This yields
$\xi_{2}^{*}G\xi_{2}=0$. Consequently, $G\xi_{2}=0$ and
$-j\beta_{2}\xi_{2}=Y(j\omega)\xi_{2}=-jH\xi_{2}/\omega$.
Therefore $\xi_{2}$ has to be an eigenvector of $H$. In particular
we can write ${\rm null}\,G\cap{\rm null}\,(H-\beta_{2}\omega
I_{q})\supset{\rm span}\,\{\xi_{2}\}$ and \eqref{eqn:PBH4} once
again fails.

To show the other direction suppose this time that
\eqref{eqn:PBH4} fails. Then we can find an eigenvector
$\xi_{2}\notin{\rm range}\,{\bf 1}_{q}$ that satisfies
$G\xi_{2}=0$ and $H\xi_{2}=\beta_{2}\xi_{2}$ for some
$\beta_{2}\in\Real$. Note that $\beta_{2}$ has to be real because
it is an eigenvalue of $H$, a real symmetric matrix. Then we can
write
$Y(j\omega)\xi_{2}=(G-j\omega^{-1}H)\xi_{2}=-j\beta_{2}\omega^{-1}\xi_{2}$.
That is,  $\lambda=-j\beta_{2}\omega^{-1}$ is an eigenvalue of
$Y(j\omega)$. We also have $Y(j\omega){\bf 1}_{q}=0$. Therefore
${\rm Re}\,\lambda_{2}(Y(j\omega))=0$ and \eqref{eqn:PBH3} fails.
\end{proof}

\begin{remark}
Though a simple reexpression of Theorem~\ref{thm:one},
Theorem~\ref{thm:complexL} is nevertheless significant (from the
synchronization point of view) for it suggests a natural way of
combining the two different interconnection graphs: the
dissipative coupling graph and the restorative coupling graph. The
result is a single graph with complex-weighted edges whose
Laplacian is the admittance matrix $Y(j\omega)$.
\end{remark}

\begin{figure}[h]
\begin{center}
\includegraphics[scale=0.55]{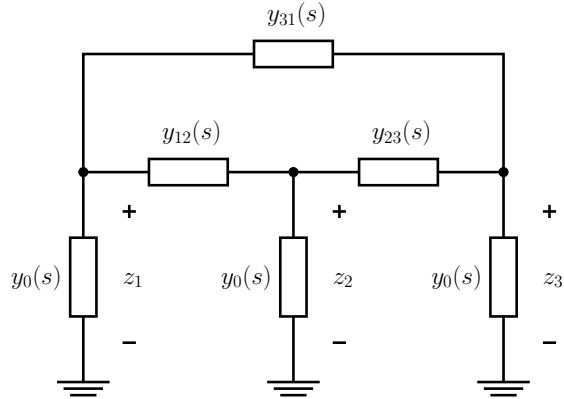}
\caption{Three coupled electrical
oscillators.}\label{fig:threeLC2n}
\end{center}
\end{figure}

We now briefly discuss how far the range of
condition~\eqref{eqn:PBH3} might extend. To this end we consider a
generalization of the dynamics~\eqref{eqn:harmonicLC} employing
the transfer function approach; see Fig.~\ref{fig:threeLC2n}. In
this general setting, each of $q$ identical
oscillators\footnote{Although we stick to the term {\em
oscillator}, the general setting we consider here allows also
systems that do not display oscillatory behavior, e.g., a single
capacitor.} is assumed to consist entirely of resistors,
capacitors, and inductors, all LTI and passive, i.e., with
positive resistance, capacitance, and inductance values. We denote
by $y_{0}(s)$ the admittance of an individual oscillator as seen
from a given pair of terminals. One of those terminals is
connected to the common ground and the other connects the
oscillator to the rest of the network. In short, we represent each
oscillator by an LTI passive one-port. As for coupling, the
connection between a pair $(i,\,j)$ of oscillators is through also
an LTI passive one-port (containing only resistors, capacitors,
and inductors) with admittance $y_{ij}(s)$. Note that
$y_{ij}(s)=y_{ji}(s)$. As before, we take $y_{ii}(s)=0$ and when
there is no direct connection between the pair $(i,\,j)$ we have
$y_{ij}(s)=0$. The overall interconnection gives us $Y(s)$, the
$q$-by-$q$ admittance matrix~\eqref{eqn:Y}. The array of coupled
oscillators, considered as a whole, we denote by
$\N(y_{0}(s),Y(s))$. The network $\N(y_{0}(s),Y(s))$ is said to
{\em synchronize} if the node voltages $z_{i}$ (see
Fig.~\ref{fig:threeLC2n}) synchronize, i.e.,
$|z_{i}(t)-z_{j}(t)|\to 0$ for all
$i,\,j\in\{1,\,2,\,\ldots,\,q\}$ and all initial conditions. (Note
that here the initial condition, which determines the future
evolution of the array, is the collection of all the initial
capacitor voltages and initial inductor currents throughout the
entire network.) In the sequel we will seek conditions
guaranteeing the synchronization of $\N(y_{0}(s),Y(s))$.

Thanks to passivity we will be able to proceed in our analysis
fairly rapidly. First, note that the eigenvalues that are
observable from the node voltages have to be the (finitely many)
roots of the {\em characteristic polynomial} $n(s)$ where
$n(s)/d(s)={\rm det}\,[y_{0}(s)I_{q}+Y(s)]$ and the polynomials
$n(s)$, $d(s)$ are coprime. Since the network is passive those
eigenvalues are confined to the closed left half-plane. Clearly,
the ones with strictly negative real parts do not play any role in
the steady state behavior of the network. This allows us to focus
on the eigenvalues on the imaginary axis. Suppose now
$\lambda=j\omega$ with $\omega\in\Real$ is such an eigenvalue.
Then (and only then) there exists a nonzero $\xi\in\Complex^{q}$
($\xi\in\Real^{q}$ if $\omega=0$) and $z(t)={\rm Re}(\xi
e^{j\omega t})$ is a possible trajectory that can be traced by the
node voltage vector $z=[z_{1}\ z_{2}\ \cdots\ z_{q}]^{T}$. KCL
imposes on this trajectory the constraint ${\rm
Re}\left([y_{0}(j\omega)I_{q}+Y(j\omega)]\xi e^{j\omega
t}\right)=0$. In other words,
\begin{eqnarray*}\label{eqn:purist}
\xi\in{\rm
null}\,[y_{0}(j\omega)I_{q}+Y(j\omega)]=:\setE(j\omega)\,.
\end{eqnarray*}
Therefore any steady state solution can be written as a sum of
finitely many terms $z(t)=\sum_{k}{\rm Re}(\xi_{k} e^{j\omega_{k}
t})$, where $\omega_{k}$ are distinct and
$\xi_{k}\in\setE(j\omega_{k})$ are nonzero. Evidently, this steady
state solution corresponds to a synchronized collection of node
voltages if and only if $\xi_{k}\in{\rm range}\,{\bf 1}_{q}$ for
all $k$. Note also that $\xi_{k}\in{\rm range}\,{\bf 1}_{q}$
implies $Y(j\omega_k)\xi_{k}=0$. Since
$\xi_{k}\in\setE(j\omega_{k})$, this means
$y_{0}(j\omega_{k})\xi_{k}=0$, i.e., $y_{0}(j\omega_{k})=0$. Hence
we obtained:

\begin{theorem}\label{thm:purist}
The network $\N(y_{0}(s),\,Y(s))$ synchronizes if and only if
\begin{eqnarray*}\label{eqn:purist}
{\rm null}\,[y_{0}(j\omega)I_{q}+Y(j\omega)]\subset {\rm
range}\,{\bf 1}_{q}\ \mbox{for all}\ \omega\in\Real\,.
\end{eqnarray*}
Also, for a synchronizing network, the steady state node voltages
have the form $z_{i}(t)=\sum_{k}{\rm
Re}(\alpha_{k}e^{j\omega_{k}t})$ with $\alpha_{k}\in\Complex$ and
$\omega_{k}\in\Real$ satisfying $y_{0}(j\omega_{k})=0$.
\end{theorem}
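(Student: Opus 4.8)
The plan is to reuse almost verbatim the reasoning developed in the paragraph immediately preceding the statement, and then simply assemble it into an if-and-only-if argument. The central object is the decomposition of an arbitrary steady-state solution into a finite sum of imaginary-axis modes. First I would justify that \emph{every} bounded solution approaches a steady-state solution of the form $z(t)=\sum_{k}{\rm Re}(\xi_{k}e^{j\omega_{k}t})$ with distinct $\omega_{k}\in\Real$ and $\xi_{k}\in\setE(j\omega_{k})$ nonzero; this is where passivity does the real work. Because each one-port (oscillator and coupling) is built from passive R, L, C elements, the network is a passive LTI system, so all poles of the relevant transfer function / all eigenvalues observable from the node voltages lie in the closed left half-plane, and the strictly-left ones decay. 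Hence only the finitely many imaginary-axis roots $j\omega_{k}$ of the characteristic polynomial $n(s)$ survive in steady state, and the KCL constraint ${\rm Re}([y_{0}(j\omega)I_{q}+Y(j\omega)]\xi e^{j\omega t})=0$ forces the mode shapes into $\setE(j\omega_{k})={\rm null}\,[y_{0}(j\omega_{k})I_{q}+Y(j\omega_{k})]$. All of this is already spelled out in the text above, so here it is a matter of citing it rather than reproving it.

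Granting that description of the steady state, the equivalence is almost immediate. For the ``if'' direction, suppose ${\rm null}\,[y_{0}(j\omega)I_{q}+Y(j\omega)]\subset{\rm range}\,{\bf 1}_{q}$ for all $\omega\in\Real$. Then each surviving mode shape $\xi_{k}\in\setE(j\omega_{k})$ lies in ${\rm range}\,{\bf 1}_{q}$, i.e.\ $\xi_{k}=\alpha_{k}{\bf 1}_{q}$ for some $\alpha_{k}\in\Complex$. Substituting into $z(t)=\sum_{k}{\rm Re}(\xi_{k}e^{j\omega_{k}t})$ shows that every node voltage equals the common value $\sum_{k}{\rm Re}(\alpha_{k}e^{j\omega_{k}t})$ in steady state, so $|z_{i}(t)-z_{j}(t)|\to0$ for all $i,j$ and all initial conditions; the network synchronizes. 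Moreover, as observed just before the theorem, $\xi_{k}\in{\rm range}\,{\bf 1}_{q}$ together with $\xi_{k}\in\setE(j\omega_{k})$ forces $y_{0}(j\omega_{k})=0$, which gives exactly the claimed form of the steady-state voltages and proves the second assertion.

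For the ``only if'' direction I would argue by contraposition. Suppose the null-space condition fails at some frequency $\omega_{*}\in\Real$: there is a nonzero $\xi_{*}\in{\rm null}\,[y_{0}(j\omega_{*})I_{q}+Y(j\omega_{*})]$ with $\xi_{*}\notin{\rm range}\,{\bf 1}_{q}$ (and $\xi_{*}\in\Real^{q}$ when $\omega_{*}=0$). Then, exactly as in the discussion above, $z_{*}(t)={\rm Re}(\xi_{*}e^{j\omega_{*}t})$ is an admissible node-voltage trajectory of the network that satisfies KCL at every node, and it arises from a genuine initial condition (some admissible assignment of capacitor voltages and inductor currents realizing $z_{*}(0)$ and $\dot z_{*}(0)$). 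Since $\xi_{*}\notin{\rm range}\,{\bf 1}_{q}$, at least two components of $\xi_{*}$ differ, so $z_{*,i}(t)-z_{*,j}(t)$ does not tend to zero for some pair $(i,j)$; hence the network fails to synchronize. This establishes the converse and completes the equivalence.

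The main obstacle, and the only place requiring genuine care, is the first direction's passivity argument: namely, that in this transfer-function generalization every bounded solution really does converge to a finite sum of imaginary-axis modes lying in the $\setE(j\omega_{k})$. One must invoke passivity to confine the observable spectrum to the closed left half-plane and to rule out polynomial-in-$t$ growth from repeated imaginary-axis eigenvalues (i.e.\ nontrivial Jordan blocks on the axis), so that the surviving steady state is a pure sum of sinusoids rather than a secularly growing term. The paper handles this informally in the lead-in paragraph, and I would lean on that same passivity structure (together with the coprimeness of $n(s),d(s)$ isolating the observable modes) to make the convergence rigorous; the rest of the proof is then the routine linear-algebra bookkeeping sketched above.
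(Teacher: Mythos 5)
Your proposal is correct and takes essentially the same approach as the paper: the paper in fact gives no separate proof of Theorem~\ref{thm:purist}, deriving it entirely in the lead-in paragraph (ending ``Hence we obtained:''), whose passivity-based decomposition of the steady state into imaginary-axis modes $\xi_{k}\in{\rm null}\,[y_{0}(j\omega_{k})I_{q}+Y(j\omega_{k})]$ and whose observation that $\xi_{k}\in{\rm range}\,{\bf 1}_{q}$ forces $y_{0}(j\omega_{k})=0$ are exactly what you reuse and reorganize into the ``if'' and ``only if'' directions. Your flagged concern (ruling out secular growth from repeated imaginary-axis eigenvalues) is likewise left informal in the paper itself, so your argument sits at the same level of rigor as the original.
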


Purists may rightfully contend that the subspace $\setE(j\omega)$
is not always well-defined because for certain frequencies
$\omega$ either $y_{0}(j\omega)$ or some entries $y_{ij}(j\omega)$
of $Y(j\omega)$ may attain infinite magnitude. This however is
only a minor mathematical obstacle, easy to circumvent by thinking
in terms of the physical system that the model stands for.
Consider the case $|y_{0}(j\omega)|=\infty$. This means that the
impedance $y_{0}^{-1}(j\omega)$ is zero, i.e., the oscillators
behave as short circuit at that particular frequency $\omega$. As
a result, all nodes are grounded, i.e., all the node voltages
$z_{i}$ have to be zero, meaning $\setE(j\omega)=\{0\}$. Consider
now the other potentially ambiguous case,
$|y_{ij}(j\omega)|=\infty$ for certain pairs $(i,\,j)$ while
$|y_{0}(j\omega)|<\infty$. Note that $|y_{ij}(j\omega)|=\infty$
implies that the nodes $i$ and $j$ are short-circuited, i.e.,
$z_{i}=z_{j}$. To get rid of the infinite terms
$y_{ij}(j\omega)=y_{ji}(j\omega)$ in the expression ${\rm
null}\,[y_{0}(j\omega)I_{q}+Y(j\omega)]$ we can remove $i$th and
$j$th rows from the matrix $[y_{0}(j\omega)I_{q}+Y(j\omega)]$ and
inject the following two new rows: (i) the sum of the removed pair
of rows and (ii) a row that imposes the equality $z_{i}=z_{j}$.
The cure can be repeated until all the infinite terms are gone.
Let us demonstrate the procedure on an example network with $q=4$
nodes whose admittance matrix reads
\begin{eqnarray*}
Y(s)=\left[\begin{array}{cccc}
y_{12}(s)+y_{14}(s)&-y_{12}(s)&0&-y_{14}(s)\\
-y_{12}(s)&y_{12}(s)+y_{23}(s)&-y_{23}(s)&0\\
0&-y_{23}(s)&y_{23}(s)+y_{34}(s)&-y_{34}(s)\\
-y_{14}(s)&0&-y_{34}(s)&y_{14}(s)+y_{34}(s)
\end{array}\right]
\end{eqnarray*}
Suppose that at some frequency $\omega$ the admittances
$y_{12}(j\omega)$ and $y_{23}(j\omega)$ are infinite. This gives
us the equalities $z_{1}=z_{2}$ and $z_{2}=z_{3}$. Summing up the
first three rows of $[y_{0}(j\omega)I_{q}+Y(j\omega)]$ lets us get
rid of the terms $y_{12}(j\omega)$ and $y_{23}(j\omega)$. Then we
inject the extra rows $[1\ -1\ 0\ 0]$ and $[0\ 1\ -1\ 0]$ to
represent the relation $z_{1}=z_{2}=z_{3}$. Hence we can express
$\setE(j\omega)$ as
\begin{eqnarray*}
\setE(j\omega)={\rm null}\left[\begin{array}{cccc}
y_{0}(j\omega)+y_{14}(j\omega)&y_{0}(j\omega)&y_{0}(j\omega)+y_{34}(j\omega)&-y_{14}(j\omega)-y_{34}(j\omega)\\
-y_{14}(j\omega)&0&-y_{34}(j\omega)&y_{0}(j\omega)+y_{14}(j\omega)+y_{34}(j\omega)\\
1&-1&0&0\\
0&1&-1&0
\end{array}\right]
\end{eqnarray*}
with righthand side cleansed of the infinite terms. Incidentally,
another relevant point we want to make has to do with the
eigenvalues of $Y(j\omega)$. Since the roots of the polynomial
$p(\lambda)={\rm det}\,[-\lambda I_{q}+Y(j\omega)]$ are the
eigenvalues of $Y(j\omega)$, we can use the procedure described
above to {\em define} the finite eigenvalues of $Y(j\omega)$ when
some of its entries are infinite. For instance, for the previous
example, the finite eigenvalues of $Y(j\omega)$, when
$y_{12}(j\omega)$ and $y_{23}(j\omega)$ are infinite, are defined
as the roots of the polynomial
\begin{eqnarray*}
p(\lambda)={\rm det}\left[\begin{array}{cccc}
-\lambda+y_{14}(j\omega)&-\lambda&-\lambda+y_{34}(j\omega)&-y_{14}(j\omega)-y_{34}(j\omega)\\
-y_{14}(j\omega)&0&-y_{34}(j\omega)&-\lambda+y_{14}(j\omega)+y_{34}(j\omega)\\
1&-1&0&0\\
0&1&-1&0
\end{array}\right]\,.
\end{eqnarray*}
Being thus able to single out the finite eigenvalues allows us to
continue to use the notation $\lambda_{k}(Y(j\omega))$, which will
henceforth stand for the $k$th smallest finite eigenvalue of
$Y(j\omega)$ with respect to the real part.

Consider now a network $\N(y_{0}(s),\,Y(s))$ that does not
synchronize. For this network Theorem~\ref{thm:purist} assures us
that there exist a vector $\xi\notin{\rm range}\,{\bf 1}_{q}$ and
a frequency $\omega\in\Real$ satisfying
$Y(j\omega)\xi=-y_{0}(j\omega)\xi$ and $|y_{0}(j\omega)|<\infty$.
Therefore $\lambda=-y_{0}(j\omega)$ is a (finite) eigenvalue of
$Y(j\omega)$. Since the network is passive, all the eigenvalues of
$Y(j\omega)$ belong to the closed right half-plane, i.e., ${\rm
Re}\,\lambda_{k}(Y(j\omega))\geq 0$ for all $k$, whence ${\rm
Re}\,\lambda\geq 0$. Also, again due to passivity, ${\rm
Re}\,y_{0}(j\omega)\geq 0$, whence ${\rm Re}\,\lambda\leq 0$.
Consequently, ${\rm Re}\,\lambda = 0$. This implies, since
$Y(j\omega){\bf 1}_{q}=0$, the matrix $Y(j\omega)$ has at least
two eigenvalues on the imaginary axis. This allows us to assert
${\rm Re}\,\lambda_{2}(Y(j\omega))=0$. To summarize:

\begin{corollary}
The network $\N(y_{0}(s),\,Y(s))$ synchronizes if ${\rm
Re}\,\lambda_{2}(Y(j\omega))>0$ for all $\omega\in\Real$.
\end{corollary}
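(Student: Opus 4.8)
The plan is to establish the contrapositive: I will show that a network which fails to synchronize must satisfy ${\rm Re}\,\lambda_{2}(Y(j\omega))=0$ at some frequency, so that the stated strict-positivity hypothesis can hold only for synchronizing networks. The whole argument will run through the already-proved Theorem~\ref{thm:purist} together with the two passivity bounds recorded in the surrounding discussion.

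First I would feed the failure of synchronization into Theorem~\ref{thm:purist}. Its ``only if'' direction supplies a frequency $\omega\in\Real$ and a vector $\xi\notin{\rm range}\,{\bf 1}_{q}$ lying in $\setE(j\omega)={\rm null}\,[y_{0}(j\omega)I_{q}+Y(j\omega)]$. I may assume $|y_{0}(j\omega)|<\infty$ at this $\omega$, since where the oscillator short-circuits one has $\setE(j\omega)=\{0\}$ and no such $\xi$ can exist. Rearranging $[y_{0}(j\omega)I_{q}+Y(j\omega)]\xi=0$ then exhibits $\lambda:=-y_{0}(j\omega)$ as a finite eigenvalue of $Y(j\omega)$ whose eigenvector $\xi$ is \emph{not} a multiple of ${\bf 1}_{q}$.

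The heart of the argument is to trap $\lambda$ on the imaginary axis by squeezing it between the two passivity bounds. Passivity of the coupling network makes $Y(j\omega)$ positive real, so its Hermitian part is positive semidefinite; taking $\xi$ to be a unit eigenvector so that $\lambda=\xi^{*}Y(j\omega)\xi$, we get ${\rm Re}\,\lambda={\rm Re}\,(\xi^{*}Y(j\omega)\xi)\geq 0$, which is exactly the assertion ${\rm Re}\,\lambda_{k}(Y(j\omega))\geq 0$ for every $k$. Passivity of the individual oscillator supplies the complementary bound ${\rm Re}\,y_{0}(j\omega)\geq 0$, whence ${\rm Re}\,\lambda=-{\rm Re}\,y_{0}(j\omega)\leq 0$. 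The two inequalities force ${\rm Re}\,\lambda=0$.

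It then remains to count eigenvalues on the imaginary axis and invoke the ordering convention. The vector ${\bf 1}_{q}$ is always an eigenvector of $Y(j\omega)$ at the origin, and $\xi\notin{\rm range}\,{\bf 1}_{q}$ contributes a second eigenvalue with zero real part --- either a distinct purely imaginary $\lambda\neq 0$, or the origin again but now with an eigenspace of dimension at least two. In either case $Y(j\omega)$ carries two eigenvalues of vanishing real part; since all its eigenvalues sit in the closed right half-plane, the ordering ${\rm Re}\,\lambda_{1}\leq{\rm Re}\,\lambda_{2}\leq\cdots$ yields ${\rm Re}\,\lambda_{2}(Y(j\omega))=0$, contradicting the hypothesis. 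I expect the genuine obstacle to lie not in this assembly but in the two passivity inputs and the infinite-entry bookkeeping: justifying ${\rm Re}\,\lambda_{k}(Y(j\omega))\geq 0$ and ${\rm Re}\,y_{0}(j\omega)\geq 0$ for \emph{arbitrary} LTI passive one-ports appeals to positive-realness rather than to the explicit $G+s^{-1}H$ structure of the earlier sections, and one must ensure that $\lambda_{2}(Y(j\omega))$ always refers to the finite eigenvalue delivered by the row-replacement cleansing procedure, so that the eigenvalue count remains meaningful at frequencies where some entries $y_{ij}(j\omega)$ diverge.
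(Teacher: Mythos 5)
Your proposal is correct and follows essentially the same route as the paper: the contrapositive via Theorem~\ref{thm:purist}, identifying $\lambda=-y_{0}(j\omega)$ as a finite eigenvalue of $Y(j\omega)$ with eigenvector $\xi\notin{\rm range}\,{\bf 1}_{q}$, squeezing ${\rm Re}\,\lambda$ between the two passivity bounds, and then counting ${\bf 1}_{q}$ as a second eigenvector to conclude ${\rm Re}\,\lambda_{2}(Y(j\omega))=0$. The only additions beyond the paper's argument are your explicit Rayleigh-quotient justification of ${\rm Re}\,\lambda_{k}(Y(j\omega))\geq 0$ and the case split at the origin, both of which are fine.
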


\section{Conclusion}

In this paper we studied the synchronization of identical (linear)
pendulums coupled via dampers and springs. We first presented a
necessary and sufficient condition for synchronization and then
pointed out a sufficient set of conditions that may occasionally
turn out to be easier to check than the former. Toward the end of
the paper we applied the results obtained for pendulums to
understanding better the collective behavior of coupled
oscillators in LTI passive electrical networks. In particular, we
established a relation between the second smallest eigenvalue of
the node admittance matrix and the tendency of the individual
systems to oscillate in unison.

\bibliographystyle{plain}
\bibliography{references}
\end{document}